\documentclass[11pt]{article}
\usepackage[margin=.75in]{geometry}
\usepackage{multirow}
\geometry{letterpaper}

\usepackage{amsmath, amsthm, amssymb,fancyhdr,mathrsfs, slashbox, enumerate}
\usepackage{graphicx}
\usepackage{latexsym}
\usepackage[round,authoryear]{natbib}

\usepackage{graphics,amssymb,amsmath}
\usepackage{amsmath, amsthm, amssymb,fancyhdr,mathrsfs, slashbox, enumerate}
\usepackage{graphicx}
\usepackage{latexsym}
\usepackage{epsfig}
\usepackage{amsfonts,calc}
\usepackage{multicol}
\usepackage{graphics}
\usepackage{subfigure}
\usepackage{amsthm,amsmath}
\usepackage{graphicx}
\usepackage{float}

\newtheorem{theorem}{Theorem}
\newtheorem{lemma}{Lemma}
\newtheorem{remark}{Remark}

\newcommand{\mbold}[1]{\mbox{\boldmath $#1$}}
\newcommand{\X}{{\bf X}}
\renewcommand{\P}{\mathbb{P}}
\newcommand{\balpha}{\boldsymbol{\alpha}}

\newcommand{\blambda}{{\boldsymbol \lambda}}

\newcommand{\E}{\mathbb{E}}

\newcommand{\PROS}{\text{PROS}}
\newcommand{\SRS}{\text{SRS}}
\newcommand{\RSS}{\text{RSS}}

\begin{document}

\title{ Nonparametric Density Estimation   Using Partially Rank-Ordered Set Samples With Application in Estimating  the Distribution of Wheat Yield}

\author{
Sahar  Nazari$^a$, Mohammad Jafari Jozani$^{b,} $\footnote{Corresponding author: M$_{-}$Jafari$_{-}$Jozani@Umanitoba.CA. Phone: +1 204 272 1563.} , and Mahmood  Kharrati-Kopaei$^a$\\
\footnotesize{$^a$ Department of Statistics, Shiraz University, Iran.} \\
\footnotesize{$^b$ Department of Statistics,  University of Manitoba, 
                        Winnipeg, MB, Canada, R3T 2N2.}}
\maketitle 
\begin{abstract}
 
We study  nonparametric estimation of an unknown density function $f$  based on  the ranked-based observations obtained from a  partially rank-ordered
set (PROS) sampling design.    PROS sampling design has many applications in environmental, ecological and medical studies where  the exact measurement of the variable of interest is costly  but a small number of sampling units can be ordered with respect to the variable of interest  by  any means other than actual measurements   and this can be done at low cost. 
PROS   observations involve independent   order statistics which are not identically distributed and most of the commonly used nonparametric techniques are not directly applicable to   them.  
 We first  develop kernel density estimates of $f$  based on an imperfect PROS
sampling procedure and  study  its theoretical properties.  Then, we
 consider the problem  when the
underlying  distribution is  assumed to be symmetric and  introduce some  plug-in kernel density estimators of  $f$.    We use  an EM type algorithm to estimate  misplacement probabilities associated with an imperfect PROS design.  Finally, we expand on various numerical illustrations of our results  via  several simulation studies and a case study  to estimate the distribution of   wheat yield  using  the total acreage of land which is planted in wheat as an easily obtained  auxiliary information. 
 Our results show that the PROS
density estimate performs better than its SRS and RSS counterparts.  

\end{abstract}
\noindent \textbf{Keywords:}  Imperfect subsetting;   Kernel function; Mean integrated square error; Nonparametric procedure; 
Optimal bandwidth;  Ranked set sampling.

\section{Introduction}

Nonparametric  density estimation techniques are widely  used to construct an estimate of a density function  and to provide valuable information about several features of the underlying population (e.g., skewness, multimodality, etc.) without imposing any parametric assumptions.   These methods are very popular in practice  and their advantages over histograms or parametric techniques are greatly appreciated. For example,   they are widely used  for both descriptive and analytic purposes  in economics to examine the distribution of income, wages and  poverty  (e.g., Minoiu and Reddy, 2012);  in   environmental and ecological studies   (e.g.,  Fieberg , 2007);   or in  medical research     (e.g.,  Miladinovic  et al. ), among others.

Most  of these density estimation techniques  are based on simple random sampling (SRS) design  which involve independent and identically distributed (i.i.d.) samples from the underlying population. There are only a few results available when the sampling design is different (e.g.,  Buskrik, 1998; Chen, 1999;  Gulati, 2004;  Lam et al., 2002;  Barabesi and Fattorini, 2002;  Breunig, 2001, 2008; Opsomer and Miller, 2005). The properties of nonparametric kernel  density estimation based on i.i.d.\ samples are well known and extensively studied in the literature (e.g.,  Wand and Jones, 1995; Silverman, 1986).  
In many applications, however,   the data sets  are often  generated  using more  complex  sampling designs and  they do not meet  the   i.i.d.\  assumption.   
 Examples include  the rank-based  sampling  techniques  which  are  typically   used when a small number  of  sampling units can be ordered fairly accurately with respect to a variable
of interest without actual measurements on them and this can also be done  at low cost.  This is a useful property since, quite often, exact measurements of these units can be very tedious and/or expensive. For example, for environmental risks such as radiation (soil contamination and disease clusters) or pollution (water contamination and root disease of crops), exact measurements  require substantial scientific processing of materials and a high cost as a result, while the variable of interest from a small number of experimental (sampling) units may easily be ranked. 
 These rank-based sampling designs   provide a collection of techniques to obtain more representative samples from the underlying population  with the help of the available auxiliary information. The samples obtained from such rank-based sampling designs often involve  independent  observations based on order statistics which are not identically distributed. So, it is important to develop kernel density estimators of the underlying population using such data sets  and  study their optimal properties.

 In this paper,  we study the problem of kernel density estimation  based on a  partially rank-ordered  set (PROS)  sampling design. 
  Ozturk (2011) introduced  PROS
sampling procedure as a generalization of the ranked set sampling (RSS) design.
 To obtain a ranked set sample  of
size $n$  one can proceed as follows. A set of $n$ units
is drawn from the underlying   population. The units are ranked via
some mechanism rather than the actual measurements of the variable
of interest. Then, only the unit ranked as the smallest is
selected for full measurement. Another set of $n$ units is drawn
and ranked and only the unit ranked as the second smallest is
selected for full measurement. This process is repeated $n$ times until the unit ranked the maximum is selected for the   final   measurement.
 See  Chen et al. (2003), Wolfe (2004, 2012) and references
therein for more details. 

In RSS,  rankers are forced to assign unique ranks to each observation even if they are not sure about the ranks. PROS design is  aimed at reducing the  impact of ranking error and the burden on  rankers by not requiring them to provide  a full ranking of all units in a set. Under PROS sampling technique,  rankers  have more flexibility by being able to  
divide the sampling units  into  subsets of pre-specified sizes based on their partial ranks. 
Ozturk (2011)   obtained  unbiased estimators for the
population mean and variance using PROS samples. He also showed that PROS sampling
has some advantages over RSS.  Hatefi and
Jafari Jozani (2013a, b) showed that the Fisher information of 
PROS samples is larger than the Fisher information of RSS samples of the same size.  Since 2011, 
 PROS sampling design has been the subject of many studies. Among others,  see  Gao and Ozturk (2012) for   a two-sample distribution-free inference;   Ozturk (2012) for  quantile estimation;   Frey (2012) for 
nonparametric estimation of the population mean;  Arslan and Ozturk (2013) for  parametric inference in a location-scale family of distributions, and   Hatefi et al. (2013) for finite 
 mixture model analysis based  on  PROS samples  with a fishery application.  
 
 The outline of this paper is as follows. In Section 2 we  introduce some preliminary results and present    a general theory
that  can be used to obtain   nonparametric estimates of some functionals of the  underlying distribution based on  imperfect PROS samples. In Section 3, we present
a nonparametric kernel  density estimate of the underlying distribution  based on an imperfect PROS sampling scheme
and  study its properties. We also consider the
problem of density estimation when the distribution of population
is symmetric.  In Section 4, we consider the problem of estimating the misplacement probabilities. To this end,   we propose a modified EM-algorithm to estimate the probabilities of subsetting errors. This algorithm is fairly simple to implement and simulation results show that its performance is satisfactory. In Section 5, we compare our PROS density estimate with its RSS and SRS counterparts using  simulation studies. Finally, in Section 6, we illustrate our proposed method by a real example.

\section{Necessary backgrounds and preliminary results}

In this section, we first give a formal introduction to PROS sampling design and then  present some   notations and preliminary results.  Also,  a general theory is obtained 
 to provide  nonparametric estimates of some functionals of the  underlying distribution based on  PROS samples. 

\subsection{PROS sampling design }

To obtain a PROS sample of size $N=nL$,  we   choose a set size $s$ and a design parameter  $D=\{d_1, \ldots, d_n\}$ that partitions the set  $\{1,\ldots, s\}$ into $n$
mutually exclusive subsets, where $d_j=\{(j-1)m+1,\ldots, jm\}$ and $m=s/n$. First $s$ units are randomly selected  from the underlying population 
and they  are assigned into subsets $d_j,  j=1,\ldots,n$,    without actual 
measurement of the variable of interest and only based on visual inspection or judgment,  etc. These subsets are partially judgment
ordered, i.e. all units in subset $d_j$ judged to have smaller ranks
than all units in $d_{j'}$, where $j<j'$. 
Then a unit is selected at random for measurement from the subset $d_1$ and it  is denoted 
by $X_{[d_1]1}$. Selecting another $s$ units assigning them 
into subsets, a unit is randomly drawn from subset $d_2$ and then  it is quantified and  denoted by $X_{[d_2]1}$. This process is repeated until we randomly draw a unit form   $d_n$ resulting in $X_{[d_n]1}$.
This constitutes  one  \textit{cycle} of PROS sampling technique. The cycle is then 
repeated $L$ times to generate a PROS sample of  size $N=nL$, i.e. $\X_{\PROS}=\{X_{[d_j]i}; j=1,\ldots,n; i=1,\ldots,L\}$.
 Table \ref{ta:pros-ex} shows the construction of a PROS sample with  $s=4,n=2, L=2$ and the  design parameter
$D=\{d_1,d_2\}=\{\{1,2\},\{3, 4\}\}$. Each set includes  
four units assigned into two partially ordered subsets such that  units in $d_1$ have  smaller ranks than units in $d_2$. In this subsetting process we do  not assign any ranks to units 
within each  subset  so that they  are equally likely to take any place in the
subset.  One unit, in each set from the bold faced subset, is randomly drawn and is quantified. The fully measured 
units are denoted by $X_{[d_j]i}$, $j=1, 2$; $ i=1, 2$. 
\begin{table}[h!]
\begin{center}
\caption{\small An example of PROS design}
\small{\begin{tabular}{cccc} \hline\hline
cycle & set &  Subsets & Observation \\ \hline
 1    & $S_1$ & $D_1=\{\mbold{d_1},d_2\}=\{ \mbold{\{1,2\}},\{3, 4\}\}$ & $X_{[d_1]1}$  \\ 
      & $S_2$ & $D_2=\{d_1,\mbold{d_2}\}=\{ \{1,2\},\mbold{\{3, 4\}} \}$ & $X_{[d_2]1}$  \\\hline
     2    & $S_1$ & $D_1=\{\mbold{d_1},d_2\}=\{ \mbold{\{1,2\}},\{3, 4\} \}$ & $X_{[d_1]2}$  \\
      & $S_2$ & $D_2=\{d_1,\mbold{d_2}\}=\{ \{1,2\},\mbold{\{3, 4\}} \}$ & $X_{[d_2]2}$  \\\hline\hline
 \end{tabular}}
  \label{ta:pros-ex}
 \end{center}  
 \end{table}

Note that if all units in $d_j$ have actually smaller ranks than
all units in $d_{j'}, j<j'$, then there is no subsetting error and
the PROS sample is perfect. Otherwise, we have subsetting error
and this PROS sample is called imperfect.  To model an imperfect PROS sampling design, following  Arslan and Ozturk (2013) and 
Hatefi and Jafari Jozani (2013a, b),   let ${\balpha}$ be a double stochastic  
misplacement probability 
matrix,
\begin{align}\label{alpha}
\balpha=\left[ \begin{array}{ccc}
\alpha_{d_1,d_1} & \cdots  & \alpha_{d_1,d_n} \\
\vdots           & \ddots  & \vdots           \\
\alpha_{d_n,d_1} & \cdots  & \alpha_{d_n,d_n} \end{array}
\right],
\end{align}
where $\alpha_{d_j,d_h}$ is the misplacement probability
of a unit from subset $d_h$ into subset $d_j$ with  $\sum^n_{h=1}\alpha _{d_j,d_h}=\sum^n_{j=1}\alpha_{d_j,d_h}=1$.  Throughout the paper,   we use $\PROS_{\balpha}(n, L, s, D)$ to denote an imperfect  PROS sampling  design  with subsetting error probability matrix $\balpha$, the number of subsets $n$, the number of cycles $L$,  the set size $s$,  and the design parameter $D=\{ d_j, j=1, \ldots, n \}$ where   $d_j=\{(j-1)m+1,\ldots, jm\}$,  in which  $m=s/n$ is the number of   unranked observations in each  subset.
 We note that SRS  and imperfect RSS (with ranking error probability matrix $\balpha$) can be expressed as special cases of the $\PROS_{\balpha} (n, L, s, D)$
 design  when $s=1$ and $s=n$, respectively. For a perfect PROS design,   since   $\alpha_{d_j,d_j}=1$
for $j=1,\dots,n$  and $\alpha_{d_j,d_h}=0$ for $h\ne j$,  we   use $\PROS_{\bf I}(n, L, s, D)$, where ${\bf I}$ is the identity matrix.

\subsection{Some notations and preliminary results}
In what follows, the probability
density function (pdf) and cumulative distribution function (cdf)
of the variable of interest  are denoted by $f$ and $F$, respectively. The
pdf and cdf of $X_{[d_j]i}$, for $i=1,\dots ,L$, are denoted by
$f_{[d_j]}$ and $F_{[d_j]}$, respectively,  and the pdf of the $r$-th order
statistic from a SRS of size $s$ is denoted by $f_{(r:s)}$.  We also use $i_k(g)$ to denote $\int x^kg(x)dx$ and  work with a second-order kernel density function  $K(\cdot)$ that is symmetric and satisfies the following conditions
$$i_0(K)= \int K(x)dx=1,  \quad i_0(K^2)= \int K^2(x)dx <\infty, \quad \text{and}\quad i_2(K)=\int x^2 K(x) dx <\infty. $$
The SRS, RSS and PROS density estimates  of $f$ are denoted by
$\hat{f}_{\SRS}$, $\hat{f}_{\RSS}$ and $\hat{f}_{\PROS}$,
respectively. Now, we present a useful lemma to show  the connection
between $f_{[d_j]}$ and $f$.

\begin{lemma}\label{Lemma 2.1} Let $\X_{\PROS}=\{X_{[d_j]i}; j=1,\dots,n;\
i=1,\dots,L\}$ denote a $\PROS_{\balpha}(n, L, s, D)$ sample of size $N$ from a population
with pdf $f$ and cdf $F$, respectively. Then 
\begin{align}\label{dj-u}
f_{[d_j]}(x)
&=nf(x)\sum^n_{h=1}\sum_{u \in d_h}\alpha_{d_j,d_h}{s-1 \choose u-1}F(x)^{u-1}{\overline{F}(x)}^{s-u}\nonumber\\
&= \frac{1}{m} \sum^n_{h=1}\sum_{u \in d_h}\alpha_{d_j,d_h} f_{(u:s)}(x),
\end{align}
where $\overline{F}(x)=1-F(x)$,  and
consequently
$$f(x)=\frac{1}{n}\sum^n_{j=1}f_{[d_j]}(x) \ \ \mbox{and} \ \ F(x)=\frac{1}{n}\sum^n_{j=1}F_{[d_j]}(x).$$
\end{lemma}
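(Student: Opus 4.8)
The plan is to obtain the second equality in~\eqref{dj-u} directly from the sampling mechanism, then derive the first equality by substituting the explicit order-statistic density, and finally deduce the two pooling identities using the double-stochasticity of $\balpha$.

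First I would treat the perfect subsetting case. When subset $d_h$ is formed without error it consists of exactly those order statistics whose ranks lie in $d_h=\{(h-1)m+1,\ldots,hm\}$, and since the measured unit is drawn uniformly at random from the $m$ units of the subset, each such order statistic is selected with probability $1/m$. Hence a correctly placed unit drawn from $d_h$ has density $\frac{1}{m}\sum_{u\in d_h}f_{(u:s)}$. Under imperfect subsetting, a unit measured from the subset labelled $d_j$ actually originates from its true subset $d_h$ with probability $\alpha_{d_j,d_h}$; conditioning on the true subset and summing over $h$ produces the mixture
\begin{equation*}
f_{[d_j]}(x)=\sum_{h=1}^{n}\alpha_{d_j,d_h}\,\frac{1}{m}\sum_{u\in d_h}f_{(u:s)}(x),
\end{equation*}
which is precisely the second line of~\eqref{dj-u}.

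To pass to the first line I would substitute the closed form $f_{(u:s)}(x)=s\binom{s-1}{u-1}F(x)^{u-1}\overline{F}(x)^{s-u}f(x)$ into this mixture, factor out $f(x)$, and use $m=s/n$ so that the constant $s/m$ collapses to $n$. Averaging $f_{[d_j]}$ over $j$ then gives the pooling identities: interchanging the order of summation and applying the column sum $\sum_{j=1}^{n}\alpha_{d_j,d_h}=1$ removes the misplacement weights, and because $D$ partitions $\{1,\ldots,s\}$ the remaining double sum over $h$ and $u\in d_h$ becomes a single sum over all $u\in\{1,\ldots,s\}$. The classical identity $\sum_{u=1}^{s}f_{(u:s)}(x)=sf(x)$ together with $nm=s$ yields $\frac{1}{n}\sum_{j=1}^{n}f_{[d_j]}=f$, and integrating both sides recovers the companion identity for $F$.

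The only genuinely delicate step is the first one, where the imperfect-subsetting description must be translated correctly into a finite mixture of order-statistic densities with weights $\alpha_{d_j,d_h}$; once that representation is justified, everything else is bookkeeping together with one standard order-statistic identity.
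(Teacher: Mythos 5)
Your argument is correct and complete. Note that the paper itself states Lemma \ref{Lemma 2.1} without proof (it is inherited from Ozturk (2011) and Arslan and Ozturk (2013)), so there is no in-paper derivation to compare against; your write-up supplies exactly the standard argument: the perfect-subsetting density $\frac{1}{m}\sum_{u\in d_h}f_{(u:s)}$ from uniform selection within a subset, the mixture over true subsets weighted by $\alpha_{d_j,d_h}$, the substitution $f_{(u:s)}(x)=s\binom{s-1}{u-1}F(x)^{u-1}\overline{F}(x)^{s-u}f(x)$ with $s/m=n$ for the first line of \eqref{dj-u}, and column stochasticity plus $\sum_{u=1}^{s}f_{(u:s)}=sf$ for the pooling identities. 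One small point worth tightening: you read $\alpha_{d_j,d_h}$ as the probability that a unit \emph{measured from} the judged subset $d_j$ truly originates from $d_h$, whereas the paper phrases it as the probability that a unit \emph{from} $d_h$ is misplaced \emph{into} $d_j$; these are conditional probabilities in opposite directions. Because $\balpha$ is assumed doubly stochastic, both readings make $f_{[d_j]}$ a proper mixture and lead to the same displayed formula, but you should state explicitly which conditioning you are using and that the row-sum condition $\sum_{h}\alpha_{d_j,d_h}=1$ is what guarantees the mixture weights sum to one, while the column-sum condition $\sum_{j}\alpha_{d_j,d_h}=1$ is the one needed for the pooling step. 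With that clarification the proof is airtight.
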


\begin{remark}\label{Remark 2.1} For a   $\PROS_{\bf I}(n, L, s, D)$ design, we have 
\begin{eqnarray*}
f_{[d_j]}(x)&=& nf(x)\sum_{u\in d_j}{s-1\choose u-1}F(x)^{u-1}\overline{F}{(x)}^{s-u} \nonumber\\
            &=& \frac{1}{m}\sum_{u\in d_j}f_{(u:s)}(x)=\frac{1}{m}\sum^{jm}_{r=(j-1)m+1}f_{(r:s)}(x).
\end{eqnarray*}
Therefore, a perfect PROS sample is a special case of an  imperfect
PROS sample and hence the results for a perfect PROS sampling  can be
obtained as a special case.
\end{remark}


Let $h(x)$ be a function of $x$  with $\mu_h=\E[h(X)]$. We  study   the method
of moments estimate of $\mu_h$ by using an imperfect PROS sampling
procedure, assuming that  the required moments of $h(X)$ exist.
Note that different choices  of $h(x)$ lead to different types
of estimators. For example,   $h(x)=x^l$ for $l=1,2,\dots$, corresponds to
the estimation of population moments;  $h(x)=\frac{1}{\lambda
}K\left(\frac{t-x}{\lambda }\right)$ where $K(\cdot )$ is a kernel 
function and $\lambda $ is a given constant, corresponds to the
kernel estimate of pdf and $h(x)=I(x\le c)$, where $I(A)$ is the
indicator function of  $A$, corresponds to the estimate of
cdf at point $c$.

The  method of moments estimate of $\mu_h$ based on an imperfect PROS sample
of size $N$ is given by
\begin{equation}\label{mme}
\hat{\mu}_{h.\PROS}=\frac{1}{N}\sum^L_{i=1}\sum^n_{j=1}h(X_{[d_j]i}).
\end{equation}
The properties of $\hat{\mu}_{h.\PROS}$ is discussed in
the following theorem.

\begin{theorem}\label{Theorem 2.1} Let $\X_{\PROS}= \{X_{[d_j]i}, j=1,\dots ,n;\
i=1,\dots ,L\}$ be a $\PROS_{\balpha}(n, L, s, D)$ sample of size $N$ from a population
with pdf $f$,  and let the method of moments estimator of
$\mu_{h}$ be defined as  in \eqref{mme}. Then
\begin{itemize}
\item[(i)] $\hat{\mu}_{h.PROS}$ is an unbiased estimator of $\mu_h$, i.e. $\E(\hat{\mu
}_{h.\PROS})={\mu}_h$.\

\item[(ii)] $var(\hat{\mu}_{h.PROS})\le
var(\hat{\mu}_{h.SRS})$ where $\hat{\mu}_{h.SRS}$ is the method of moments  estimator 
of $\mu_h$ based on a SRS sample of comparable size.

\item[(iii)] $\hat{\mu}_{h.PROS}$ is asymptotically
distributed as a normal distribution with mean $\mu_h$ and
variance $var(\hat{\mu }_{h.PROS})$ as ${L\to \infty}$.

\item[(iv)] $\hat{\mu}_{h.PROS}$ is a strong consistent
estimator of $\mu_{h.PROS}$ as $L \to \infty$.
\end{itemize}
\end{theorem}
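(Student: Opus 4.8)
The plan is to handle all four claims through one common structural observation: a $\PROS_{\balpha}(n,L,s,D)$ sample consists of $N=nL$ mutually independent observations (fresh sets of $s$ units are drawn for every measurement), where each $X_{[d_j]i}$ has pdf $f_{[d_j]}$ independent of the cycle index $i$, together with the representation $f(x)=\frac{1}{n}\sum_{j=1}^n f_{[d_j]}(x)$ from Lemma \ref{Lemma 2.1}. Throughout I write $\mu_{h,j}=\E[h(X_{[d_j]1})]$ and $\sigma^2_{h,j}=var(h(X_{[d_j]1}))$, both finite because the relevant moments of $h(X)$ are assumed to exist.

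For (i) I would take expectations term by term. Since $f_{[d_j]}$ does not depend on $i$, summing over $i$ contributes a factor $L$, and
$$\E(\hat{\mu}_{h.\PROS})=\frac{1}{nL}\sum_{i=1}^L\sum_{j=1}^n\int h(x)f_{[d_j]}(x)\,dx=\frac{1}{n}\sum_{j=1}^n\mu_{h,j}=\int h(x)\,\frac{1}{n}\sum_{j=1}^n f_{[d_j]}(x)\,dx=\mu_h,$$
where the last equality is exactly Lemma \ref{Lemma 2.1}.

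Part (ii) is the crux, and the only place requiring a genuine idea. By independence,
$$var(\hat{\mu}_{h.\PROS})=\frac{1}{N^2}\sum_{i=1}^L\sum_{j=1}^n\sigma^2_{h,j}=\frac{1}{n^2L}\sum_{j=1}^n\sigma^2_{h,j},$$
so, since $var(\hat{\mu}_{h.\SRS})=\sigma^2_h/(nL)$ with $\sigma^2_h=var(h(X))$, it suffices to prove $\frac{1}{n}\sum_{j=1}^n\sigma^2_{h,j}\le\sigma^2_h$. Applying the representation $f=\frac{1}{n}\sum_j f_{[d_j]}$ to the second moment gives $\E[h(X)^2]=\frac{1}{n}\sum_{j=1}^n(\sigma^2_{h,j}+\mu^2_{h,j})$, and combining with $\mu_h=\frac{1}{n}\sum_j\mu_{h,j}$ from part (i) yields the variance decomposition
$$\sigma^2_h=\frac{1}{n}\sum_{j=1}^n\sigma^2_{h,j}+\frac{1}{n}\sum_{j=1}^n(\mu_{h,j}-\mu_h)^2.$$
The second (between-subset) term is nonnegative, which delivers the inequality; equality holds precisely when all $\mu_{h,j}$ coincide, i.e. when the partial ordering carries no information about $h$.

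For (iii) and (iv) I would exploit the product structure by writing $\hat{\mu}_{h.\PROS}=\frac{1}{n}\sum_{j=1}^n\bar{Y}_j$ with $\bar{Y}_j=L^{-1}\sum_{i=1}^L h(X_{[d_j]i})$. For fixed $n$ and each $j$ the summands are i.i.d.\ with finite mean and variance, so the classical strong law gives $\bar{Y}_j\to\mu_{h,j}$ almost surely as $L\to\infty$, and averaging yields $\hat{\mu}_{h.\PROS}\to\frac{1}{n}\sum_j\mu_{h,j}=\mu_h$ a.s., proving (iv). For (iii), the univariate CLT gives $\sqrt{L}(\bar{Y}_j-\mu_{h,j})\xrightarrow{d}N(0,\sigma^2_{h,j})$; because the $n$ subset averages are built from disjoint sets of sampling units and hence mutually independent, the vector $(\bar{Y}_1,\ldots,\bar{Y}_n)$ converges jointly to independent normals, and applying the continuous linear combination $\frac{1}{n}\sum_j$ delivers $\sqrt{L}(\hat{\mu}_{h.\PROS}-\mu_h)\xrightarrow{d}N\bigl(0,\tfrac{1}{n}\sum_j\sigma^2_{h,j}\bigr)$, matching $var(\hat{\mu}_{h.\PROS})$ after rescaling. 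The substantive content lies entirely in (ii); parts (i), (iii) and (iv) are routine once Lemma \ref{Lemma 2.1} and the independence of the PROS observations are in hand.
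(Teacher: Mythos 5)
Your proof is correct and follows essentially the same route as the paper: part (i) from the mixture identity $f=\frac{1}{n}\sum_j f_{[d_j]}$, part (iii) via the decomposition $\hat{\mu}_{h.\PROS}=\frac{1}{n}\sum_j \bar{Y}_j$ and the CLT, and part (iv) from the strong law. The only difference is in part (ii), where the paper simply quotes the variance identity $var(\hat{\mu}_{h.\PROS})=var(\hat{\mu}_{h.\SRS})-\frac{1}{n^2L}\sum_{j}(\mu_{h[d_j]}-\mu_h)^2$ from Ozturk (2011), whereas you derive it from scratch via the between/within variance decomposition --- a more self-contained presentation of the same fact.
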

\begin{proof}
The proof is  essentially the same as the one given by  Ozturk (2011) for $h(x)=x$ which we present here for the sake of completeness. Part $(i)$  is  an immediate consequence of Lemma \ref{Lemma 2.1}. For part
$(ii)$,  using Ozturk (2011),  we have
\begin{eqnarray*}
var(\hat{\mu}_{h.PROS})&=&var(\hat{\mu}_{h.SRS})-\frac{1}{n^2L}\sum^n_{j=1}(\mu_{h[d_j]}-\mu_{h})^2\\
                       &\le& var(\hat{\mu}_{h.SRS}),
\end{eqnarray*}
where ${\mu}_{h[d_j]}=\E[h(X_{[d_j]})]$. Note that the equality
holds if and only if $\mu_{h[d_j]}=\mu_h$ for  all $j=1,\dots,n$;
i.e. the subsetting process is purely random. For part $(iii)$,
note that
$\hat{\mu}_{h.\PROS}=\frac{1}{n}\sum^n_{j=1}\overline{h}[d_j]$,
where $\overline{h}[d_j]=\frac{1}{L}\sum^L_{i=1}h(X_{[d_j]i})$.
However, for a fixed $j$, $\overline{h}[d_j]$ converges
asymptotically to a normal distribution with mean
$\E[h(X_{[d_j]})]$ and variance $var[h(X_{{[d}_j]})]/L$ as $L\to
\infty$ by the Central Limit Theorem. Therefore, the result holds
for $\hat{\mu}_{h.PROS}$. Finally, part $(iv)$ follows from  the
Strong Law of Large Numbers.
\end{proof}

\section{Kernel density estimate  based on  PROS samples}

In this section, we present a kernel density estimator
of $f(x)$ based on an imperfect PROS sample of size $N$ and  study  some theoretical properties of our proposed estimator.
Also, we consider the problem of density estimation when $f(\cdot)$ is 
assumed to be symmetric. 

\subsection{Main results}

To obtain  a PROS kernel density estimator of $f(\cdot)$ we first  note that from
Lemma \ref{Lemma 2.1} we have $f(x)=\frac{1}{n}\sum^{n}_{j=1}f_{[d_j]}(x)$. For a fixed $j$, the sub-sample $X_{[d_j]i},
i=1,\dots,L$ can be considered as a simple random sample of size
$L$ from  $f_{[d_j]}(\cdot)$. Hence,
$f_{[d_j]}(x)$ can be estimated by the usual kernel method as follows
\begin{equation}\label{fhat-j}
\hat{f}_{[d_j]}(x)=\frac{1}{Lh}\sum^L_{i=1}{K\left(\frac{x-X_{[d_j]i}}{h}\right)},
\end{equation}
where $h$ is the bandwidth to be determined. We propose a
kernel estimate of $f(x)$  as
\begin{align}\label{kernel-estimate}
\hat{f}_{\PROS}(x)=\frac{1}{n}\sum^n_{j=1}\hat{f}_{[d_j]}(x)=\frac{1}{nLh}\sum^L_{i=1}\sum^n_{j=1}K\left(\frac{x-X_{[d_j]i}}{h}\right).
\end{align}
Now, we establish  some 
 theoretical properties of $\hat{f}_{\PROS}(x)$. To this end, let $\hat{f}_{\SRS}(x)=\frac{1}{N\, h}\sum^N_{j=1}K\left(\frac{x-X_j}{h}\right)$  be a kernel density estimator of $f(x)$ based on a SRS of size $N$ and note that  (e.g., Silverman, 1986)
 $$\E[\hat{f}_{\SRS}(x)]=f(x)+O(h^2),$$ 
 and  
 \begin{align}\label{var-srs}var(\hat{f}_{\SRS}(x))=\frac{1}{Nh}f(x)i_{0}(K^2)-\frac{1}{N}f^2(x)+O(\frac{h^2}{N}).
 \end{align}
  
\begin{theorem}\label{kernel-properties}  Suppose that $\hat{f}_{\PROS}(x)$ is a kernel density estimator of $f(x)$ based on a $\PROS_{\balpha}(n, L, s, D)$ sample of size
$N=nL$ and let $\hat{f}_{\SRS}(x)$ denote its corresponding SRS kernel estimator based on a SRS sample of the same size.  Then, 
\begin{itemize}
\item[(i)] $\E[\hat{f}_{\PROS}(x)]=\E[\hat{f}_{\SRS}(x)]$,

\item[(ii)] $var(\hat{f}_{\PROS}(x))=var(\hat{f}_{\SRS}(x))-\frac{1}{Nn}\sum^n_{j=1}(\mu
_{K[d_j]}-\mu_K)^2$,  where
$$\mu_{K[d_j]}=\E\left[\frac{1}{h}K\left(\frac{x-X_{[d_j]}}{h}\right)\right]\ \mbox{and} \
\mu_K=\E\left[\frac{1}{h}K\left(\frac{x-X}{h}\right)\right],$$
in which $X_{[d_j]}$ is an observation obtained from a  $\PROS_{\balpha}(n, L, s, D)$ design. 

\item[(iii)] $\hat{f}_{\PROS}(x)$ at a fixed point $x$ is
distributed asymptotically as a normal distribution with mean
$\E[\hat{f}_{PROS}(x)]$ and variance $var(\hat{f}_{\PROS}(x))$ for
large $L$.
\end{itemize}
\end{theorem}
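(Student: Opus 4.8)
The plan is to recognize that, for a fixed evaluation point $x$, the estimator $\hat{f}_{\PROS}(x)$ is exactly the method of moments estimator of Theorem \ref{Theorem 2.1} applied to the bounded test function $g(y)=\frac{1}{h}K\left(\frac{x-y}{h}\right)$: substituting this $g$ into \eqref{mme} and using $N=nL$ reproduces \eqref{kernel-estimate} verbatim, so $\hat{f}_{\PROS}(x)=\hat{\mu}_{g.\PROS}$, and likewise $\hat{f}_{\SRS}(x)=\hat{\mu}_{g.\SRS}$. Writing $\mu_{K[d_j]}=\E[g(X_{[d_j]})]$ and $\mu_K=\E[g(X)]$, parts (i)--(iii) are then the specializations of Theorem \ref{Theorem 2.1}(i)--(iii) to this choice of $g$; in particular part (ii) is precisely the exact variance identity derived inside the proof of Theorem \ref{Theorem 2.1}(ii) with $\mu_{h[d_j]}$ replaced by $\mu_{K[d_j]}$ and $n^2L=nN$. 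I would nonetheless record the short direct calculations below, since they make transparent where the mixture identity of Lemma \ref{Lemma 2.1} enters.

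For part (i), I would use linearity of expectation and the within-subset i.i.d.\ structure of \eqref{fhat-j} to write $\E[\hat{f}_{\PROS}(x)]=\frac{1}{n}\sum_{j=1}^{n}\E[\hat{f}_{[d_j]}(x)]=\frac{1}{n}\sum_{j=1}^{n}\mu_{K[d_j]}$, with $\mu_{K[d_j]}=\int g(y)f_{[d_j]}(y)\,dy$. The decisive step is to invoke $f=\frac{1}{n}\sum_{j=1}^{n}f_{[d_j]}$ from Lemma \ref{Lemma 2.1}: interchanging the finite sum with the integral gives $\frac{1}{n}\sum_{j=1}^{n}\mu_{K[d_j]}=\int g(y)f(y)\,dy=\mu_K=\E[\hat{f}_{\SRS}(x)]$. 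Thus the PROS estimator inherits the same (first-moment) expectation as the SRS estimator, so its bias for $f(x)$ is the identical $O(h^2)$ term.

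For part (ii), I would first use that the $n$ subset sub-samples come from disjoint draws and are therefore independent, whence $var(\hat{f}_{\PROS}(x))=\frac{1}{n^{2}}\sum_{j=1}^{n}var(\hat{f}_{[d_j]}(x))$. Since $X_{[d_j]i}$, $i=1,\dots,L$, are i.i.d.\ from $f_{[d_j]}$, we get $var(\hat{f}_{[d_j]}(x))=\frac{1}{L}\{\E[g(X_{[d_j]})^{2}]-\mu_{K[d_j]}^{2}\}$, and hence $var(\hat{f}_{\PROS}(x))=\frac{1}{nN}\sum_{j=1}^{n}\{\E[g(X_{[d_j]})^{2}]-\mu_{K[d_j]}^{2}\}$ using $n^{2}L=nN$. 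Applying the mixture identity once more yields $\frac{1}{n}\sum_{j}\E[g(X_{[d_j]})^{2}]=\E[g(X)^{2}]$, so the second-moment part collapses to $\frac{1}{N}\E[g(X)^{2}]$. Completing the square through $\sum_{j}\mu_{K[d_j]}^{2}=\sum_{j}(\mu_{K[d_j]}-\mu_K)^{2}+n\mu_K^{2}$, where I use $\mu_K=\frac{1}{n}\sum_{j}\mu_{K[d_j]}$ from part (i), converts the remaining term into $\frac{1}{N}\mu_K^{2}+\frac{1}{nN}\sum_{j}(\mu_{K[d_j]}-\mu_K)^{2}$; collecting terms and recognising $\frac{1}{N}\{\E[g(X)^{2}]-\mu_K^{2}\}=var(\hat{f}_{\SRS}(x))$ gives exactly $var(\hat{f}_{\PROS}(x))=var(\hat{f}_{\SRS}(x))-\frac{1}{Nn}\sum_{j=1}^{n}(\mu_{K[d_j]}-\mu_K)^{2}$.

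Finally, for part (iii) I would again exploit independence across subsets: for each fixed $j$, $\hat{f}_{[d_j]}(x)=\frac{1}{L}\sum_{i=1}^{L}g(X_{[d_j]i})$ is an average of $L$ i.i.d.\ terms, so the classical CLT makes $\hat{f}_{[d_j]}(x)$ asymptotically normal as $L\to\infty$; since $n$ is fixed and the $n$ averages are independent, their convex combination $\hat{f}_{\PROS}(x)=\frac{1}{n}\sum_{j}\hat{f}_{[d_j]}(x)$ is asymptotically normal with the mean and variance found in (i) and (ii). I do not expect a genuine obstacle: the argument is a direct specialization of Theorem \ref{Theorem 2.1}, with Lemma \ref{Lemma 2.1} doing the real work in both the bias and variance steps. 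The only points deserving care are holding the bandwidth $h$ fixed so that $g$ is a fixed bounded function with $\E[g(X_{[d_j]})^{2}]<\infty$ under the kernel condition $i_0(K^2)<\infty$, and noting that the variance relation in (ii) is an exact identity rather than the $O(h^2/N)$ approximation recorded in \eqref{var-srs}.
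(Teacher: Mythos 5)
Your proposal is correct and follows exactly the paper's route: the paper proves this theorem in one line by applying Theorem \ref{Theorem 2.1} with $h(t)=\frac{1}{h}K\left(\frac{x-t}{h}\right)$, which is precisely your specialization to $g$. The explicit bias and variance calculations you record are a faithful expansion of what the paper leaves implicit (they mirror the proof of Theorem \ref{Theorem 2.1} itself), so there is nothing to add.
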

\begin{proof}
The results hold immediately from Theorem  \ref{Theorem 2.1} by letting
$h(t)=\frac{1}{h}K(\frac{x-t}{h})$.
\end{proof}

Theorem \ref{kernel-properties} shows that $\hat{f}_{\PROS}(x)$ has the same
expectation as $\hat{f}_{\SRS}(x)$ and a smaller variance than
$\hat{f}_{\SRS}(x)$. This implies that $\hat{f}_{\PROS}(x)$ has a
smaller mean integrated  square error (MISE) than $\hat{f}_{\SRS}(x)$,
that is
$$\text{MISE}(\hat{f}_{\PROS})= \int{\E\left(\hat{f}_{\PROS}(x)-f(x)\right)^2dx}\le\int{\E\left(\hat{f}_{\SRS}(x)-f(x)\right)^2dx}=\text{MISE}(\hat{f}_{\SRS}).$$

\noindent In addition, by using part $(iii)$ of Theorem \ref{kernel-properties}, one
can construct an asymptotic pointwise $100(1-\nu)\%$ confidence interval for
$f(x)$ as follows
$$\hat{f}_{\PROS}(x)\pm z_{\nu/2}\sqrt{\widehat{var}_{\PROS}(x)},$$
where $z_{\nu/2}$ is the $100(1-\frac{\nu}{2})$-th quantile of the standard normal distribution and
$$\widehat{var}_{\PROS}(x)=\frac{1}{Nh}\hat{f}_{\PROS}(x)i_0(K^2)-\frac{1}{Nn} \sum^n_{j=1}\hat{f}^2_{[d_j]}(x),$$
 where $\hat{f}_{[d_j]}(x)$ is a consistent estimators of $f_{[d_j]}(x)$  given  by \eqref{fhat-j}.

Note that our estimate $\hat{f}_{\PROS}(x)$ depends on a bandwidth
$h$ which should be determined in practice. We present an
asymptotic optimal bandwidth by minimizing the asymptotic
expansion of MISE$(\hat{f}_{\PROS})$. In this regard, we first
present a lemma which is useful for obtaining the asymptotic
expansion of MISE$(\hat{f}_{\PROS})$.

\begin{lemma} \label{Lemma 3.1}Assuming that  the underlying density $f(\cdot)$ is sufficiently smooth with desired derivatives  and $K(\cdot)$ is  a second-order kernel function, for a fixed
$n$, as $h\to 0$, we have 
$$\E^2\left[\frac{1}{h}K\left(\frac{x-X}{h}\right)\right]-\frac{1}{n}\sum^n_{j=1}\E^2\left[\frac{1}{h}K\left(\frac{x-X_{[d_j]}}{h}\right)\right]=f^2(x)-\frac{1}{n}\sum^n_{j=1}f^2_{[d_j]}(x)+O(h^2).$$
\end{lemma}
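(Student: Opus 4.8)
The plan is to reduce the whole statement to the standard bias expansion of a kernel-smoothed density, applied term by term, followed by a squaring step. Since the generic computation is identical for $f$ and for each subset density $f_{[d_j]}$, I would first record it for an arbitrary smooth density and then specialize.

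First, for any sufficiently smooth density $g$ with associated variable $Y\sim g$, the substitution $u=(x-t)/h$ gives
$$
\E\left[\frac{1}{h}K\left(\frac{x-Y}{h}\right)\right]=\int \frac{1}{h}K\left(\frac{x-t}{h}\right)g(t)\,dt=\int K(u)\,g(x-hu)\,du.
$$
Expanding $g(x-hu)$ by Taylor's theorem and invoking the second-order kernel conditions $i_0(K)=1$, $i_1(K)=0$ (which holds because $K$ is symmetric), and $i_2(K)<\infty$, the first-order term vanishes and I obtain
$$
\E\left[\frac{1}{h}K\left(\frac{x-Y}{h}\right)\right]=g(x)+\frac{h^2}{2}\,i_2(K)\,g''(x)+o(h^2)=g(x)+O(h^2).
$$

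Second, I would apply this with $g=f$ (so $Y=X$) and with $g=f_{[d_j]}$ (so $Y=X_{[d_j]}$) for each $j=1,\dots,n$. Writing $\mu_K$ and $\mu_{K[d_j]}$ for the corresponding expectations as in Theorem~\ref{kernel-properties}, this yields
$$
\mu_K=f(x)+O(h^2),\qquad \mu_{K[d_j]}=f_{[d_j]}(x)+O(h^2).
$$
Squaring and using $(a+O(h^2))^2=a^2+O(h^2)$ for the fixed finite values $a=f(x)$ and $a=f_{[d_j]}(x)$ gives $\mu_K^2=f^2(x)+O(h^2)$ and $\mu_{K[d_j]}^2=f_{[d_j]}^2(x)+O(h^2)$. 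Subtracting the average of the latter from the former, and noting that $n$ is fixed so that the mean of $n$ error terms of order $O(h^2)$ is again $O(h^2)$, produces exactly the claimed identity.

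The computation is almost entirely routine; the single point that requires attention is the smoothness of the subset densities $f_{[d_j]}$, which is what licenses the second-order Taylor expansion for each of those terms. This is immediate from the explicit representation in Lemma~\ref{Lemma 2.1}: each $f_{[d_j]}$ is a finite linear combination of the order-statistic densities $f_{(u:s)}$, and every $f_{(u:s)}$ is a product of the smooth functions $f$, $F$, and $\overline{F}$, hence is twice continuously differentiable whenever $f$ is. I do not expect any genuine obstacle beyond bookkeeping of the error terms.
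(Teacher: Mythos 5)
Your proposal is correct and follows essentially the same route as the paper: change of variables, a second-order Taylor expansion killed at first order by the symmetry of $K$, then squaring and averaging over the fixed number of subsets. The only cosmetic difference is that the paper Taylor-expands the order-statistic densities $f_{(u:s)}$ inside the mixture representation of $f_{[d_j]}$ from Lemma~\ref{Lemma 2.1}, whereas you expand $f_{[d_j]}$ directly and invoke that representation only to justify its smoothness --- a tidy but equivalent bookkeeping choice.
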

\begin{proof}
Using \eqref{dj-u} and by changing the variable $v= \frac{x-t}{h}$,     we have
\begin{eqnarray*}
\E\left[\frac{1}{h}K\left(\frac{x-X_{[d_j]}}{h}\right)\right]&=&\int\frac{1}{h}K\left(\frac{x-t}{h}\right)f_{[d_j]}(t)dt\\
                                      &=&\frac{1}{m}\sum^n_{l=1}\sum_{u\in d_l}\alpha_{d_j,d_l}\int{K(v)f_{(u:s)}(hv+x)dv}.
\end{eqnarray*}
 Replacing 
$f_{(u:s)}(hv+x)$ by its Taylor expansion $f_{(u:s)}(x)+hvf'_{(u:s)}(x)+(hv)^2f^{''}_{(u:s)}(x)+O(h^2),$
and using the properties of the kernel function $K(\cdot)$, one can easily get
\begin{eqnarray*}
\E\left[\frac{1}{h}K\left(\frac{x-X_{[d_j]}}{h}\right)\right]&=&\frac{1}{m}\sum^n_{l=1}{\sum_{u \in d_l}{\alpha_{d_j,d_l}(f_{(u:s)}(x)+O(h^2))}}\\
                                                            &=&f_{[d_j]}(x)+O(h^2).
\end{eqnarray*}
Consequently,
$$\E^2\left[\frac{1}{h}K\left(\frac{x-X_{[d_j]}}{h}\right)\right]=f^2_{[d_j]}(x)+O(h^2),$$
and it is similarly verified that
$$\E^2\left[\frac{1}{h}K\left(\frac{x-X}{h}\right)\right]=f^2(x)+O(h^2),$$
which completes the proof.
\end{proof}

\begin{theorem}\label{Theorem 3.2} Suppose that the same bandwidth is
used in both $\hat{f}_{\SRS}$ and $\hat{f}_{\PROS}$. Then, for large
$N$,
$$\mbox{MISE}(\hat{f}_{\PROS})=\mbox{MISE}(\hat{f}_{\SRS})-\frac{1}{N}\Delta(f,n)+O(\frac{h^2}{N}),$$
where
$\Delta(f,n)=\int{[\frac{1}{n}\sum^n_{j=1}{f^2_{[d_j]}(x)-f^2(x)}]}dx$.
\end{theorem}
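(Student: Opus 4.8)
The plan is to exploit the standard decomposition of the mean integrated squared error into integrated squared bias plus integrated variance, and then to let Theorem~\ref{kernel-properties} do most of the work. First I would write, for each estimator,
$$\MISE(\hat{f})=\int\E\left(\hat{f}(x)-f(x)\right)^2dx=\int\left[\left(\E[\hat{f}(x)]-f(x)\right)^2+var(\hat{f}(x))\right]dx.$$
By part (i) of Theorem~\ref{kernel-properties}, $\hat{f}_{\PROS}$ and $\hat{f}_{\SRS}$ have identical expectation at every $x$, hence identical bias, so the integrated squared-bias terms cancel exactly when I subtract the two $\MISE$ expressions. This reduces the entire problem to comparing the integrated variances, with no approximation incurred at this stage.

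Next I would invoke part (ii) of Theorem~\ref{kernel-properties}, which gives the pointwise variance gap $var(\hat{f}_{\PROS}(x))-var(\hat{f}_{\SRS}(x))=-\frac{1}{Nn}\sum_{j=1}^n(\mu_{K[d_j]}-\mu_K)^2$, so that $\MISE(\hat{f}_{\PROS})-\MISE(\hat{f}_{\SRS})=-\frac{1}{Nn}\int\sum_{j=1}^n(\mu_{K[d_j]}-\mu_K)^2\,dx$. The key algebraic simplification is to expand the square and use the identity $\frac{1}{n}\sum_{j=1}^n\mu_{K[d_j]}=\mu_K$, which follows directly from Lemma~\ref{Lemma 2.1}: since $f=\frac{1}{n}\sum_j f_{[d_j]}$ and both $\mu_K$ and $\mu_{K[d_j]}$ are the same linear kernel smoothing applied to $f$ and $f_{[d_j]}$ respectively, linearity of the integral yields the averaging relation. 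The cross term then collapses, leaving $\sum_{j=1}^n(\mu_{K[d_j]}-\mu_K)^2=\sum_{j=1}^n\mu_{K[d_j]}^2-n\mu_K^2$.

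Finally I would bring in Lemma~\ref{Lemma 3.1}, which is tailored to exactly this combination: it states $\mu_K^2-\frac{1}{n}\sum_j\mu_{K[d_j]}^2=f^2(x)-\frac{1}{n}\sum_j f^2_{[d_j]}(x)+O(h^2)$. Multiplying by $-n$ gives $\sum_j\mu_{K[d_j]}^2-n\mu_K^2=\sum_j f^2_{[d_j]}(x)-nf^2(x)+O(h^2)$, and substituting this into the integrated-variance expression produces
$$\MISE(\hat{f}_{\PROS})-\MISE(\hat{f}_{\SRS})=-\frac{1}{N}\int\left[\frac{1}{n}\sum_{j=1}^n f^2_{[d_j]}(x)-f^2(x)\right]dx+O\!\left(\frac{h^2}{N}\right),$$
which is precisely the claim with $\Delta(f,n)$ as defined. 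I expect no genuine difficulty here, since every step is a direct consequence of the two cited results; the only point requiring care is the error bookkeeping, namely verifying that the $O(h^2)$ remainder from Lemma~\ref{Lemma 3.1}, once integrated against $dx$ and scaled by $\frac{1}{Nn}$, contributes only $O(h^2/N)$. This uses that $n$ is fixed and that the remainder is integrable, which the smoothness assumption on $f$ guarantees.
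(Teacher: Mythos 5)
Your proposal is correct and follows essentially the same route as the paper's proof: identical bias from Theorem~\ref{kernel-properties}(i), the pointwise variance gap from part (ii), and Lemma~\ref{Lemma 3.1} to convert the kernel-moment expression into $\frac{1}{n}\sum_j f^2_{[d_j]}(x)-f^2(x)+O(h^2)$. The only difference is that you spell out the intermediate identity $\sum_j(\mu_{K[d_j]}-\mu_K)^2=\sum_j\mu_{K[d_j]}^2-n\mu_K^2$ (via $\frac{1}{n}\sum_j\mu_{K[d_j]}=\mu_K$), which the paper leaves implicit when passing from Theorem~\ref{kernel-properties}(ii) to its equation for $var(\hat{f}_{\PROS}(x))$.
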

\begin{proof}
Note that
$bias(\hat{f}_{\PROS}(x))=\E[\hat{f}_{\PROS}(x)-f(x)]=bias(\hat{f}_{\SRS}(x))$.
Therefore,
\begin{eqnarray*}
\mbox{MISE}(\hat{f}_{\PROS})&=&\int{ \left[ var(\hat{f}_{PROS}(x))+bias^2(\hat{f}_{\PROS}(x))\right]dx}\\
                      &=&\int{ \left[var(\hat{f}_{\PROS}(x))+bias^2(\hat{f}_{\SRS}(x))\right]dx}.
\end{eqnarray*}
Now, from Lemma \ref{Lemma 3.1}
\begin{align}\label{var-pros}
var(\hat{f}_{\PROS}(x))=var(\hat{f}_{\SRS}(x))-\frac{1}{N}\left[\frac{1}{n}\sum^n_{j=1}f^2_{[d_j]}(x)-f^2(x)\right]+O(\frac{h^2}{N}).
\end{align}
Therefore, the result holds.
\end{proof}

Theorem \ref{Theorem 3.2} shows that the optimal bandwidth which minimizes
MISE$(\hat{f}_{\SRS})$, asymptotically minimizes
MISE$(\hat{f}_{\PROS})$ up to order $O(N^{-1})$. That is,  one can
use the following optimal bandwidth which is obtained by
minimizing asymptotical expansion of MISE$(\hat{f}_{\SRS})$
$$h_{opt.\SRS}=i_2(K)^{-2/5}\left[\frac{i_0(K^2)}{i_0(f^{''2})}\right]^{1/5}N^{-1/5},$$
(see Chen (1999)). 
%
The optimal bandwidth  $h_{opt.\SRS}$ depends on $f(\cdot)$ which is unknown and,   in practice, a nonparametric version of it can be used (see Silverman
(1986)). Theorem  \ref{Theorem 3.2} also shows that the PROS estimate reduces the
MISE of SRS estimate at order $O(N^{-1})$, and the amount of this
reduction asymptotically is $\frac{1}{N}\Delta(f,n)$ (note that
$\Delta(f,n)$ is non-negative). Unfortunately, the value of
$\Delta(f,n)$ depends on  $f(\cdot)$;  however, as we show below,  one can
characterize asymptotic   rate of this reduction in a perfect PROS sampling
procedure as an upper bound for $\Delta(f,n)$.

\begin{lemma} \label{Lemma 3.2} Under a $\PROS_{\bf I}(n, L, s, D)$  sampling design,
$$\frac{1}{n}\sum^n_{j=1}f^2_{[d_j]}(x)=nf^2(x)\,  \P\{Y=Z\},$$
where $Y$ and $Z$ are i.i.d.  binomial random variables with  parameters $s-1$ and
$F(x)$, i.e. $Y,Z\sim B(s-1,F(x))$.
\end{lemma}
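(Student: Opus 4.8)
The plan is to start from the explicit formula for $f_{[d_j]}$ in a perfect PROS design given in Remark \ref{Remark 2.1}, namely
$$f_{[d_j]}(x)=nf(x)\sum_{u\in d_j}\binom{s-1}{u-1}F(x)^{u-1}\overline{F}(x)^{s-u},$$
and to recognize the sum over $u\in d_j$ as a probability statement about a single binomial random variable. Writing $p=F(x)$ and letting $W\sim B(s-1,p)$, each term $\binom{s-1}{u-1}p^{u-1}(1-p)^{s-u}$ is exactly $\P\{W=u-1\}$, so $\sum_{u\in d_j}\binom{s-1}{u-1}p^{u-1}(1-p)^{s-u}=\P\{W\in d_j-1\}$, where $d_j-1=\{(j-1)m,\ldots,jm-1\}$ denotes the shifted index block. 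Thus $f_{[d_j]}(x)=nf(x)\,\P\{W\in d_j-1\}$.

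Next I would square this and sum over $j$. We obtain
$$\frac{1}{n}\sum_{j=1}^{n}f_{[d_j]}^2(x)=\frac{1}{n}\sum_{j=1}^{n}n^2f^2(x)\,\P\{W\in d_j-1\}^2=nf^2(x)\sum_{j=1}^{n}\P\{W\in d_j-1\}^2.$$
The task then reduces to showing $\sum_{j=1}^{n}\P\{W\in d_j-1\}^2=\P\{Y=Z\}$ for independent copies $Y,Z\sim B(s-1,p)$. Introducing a second independent copy $W'$ of $W$, I would write $\P\{Y=Z\}=\P\{W=W'\}$ and decompose this event according to which block the common value lies in. Since the blocks $d_j-1$ for $j=1,\ldots,n$ partition $\{0,1,\ldots,s-1\}$, the events $\{W\in d_j-1\}$ are disjoint and exhaustive, so
$$\P\{W=W'\}=\sum_{j=1}^{n}\P\{W=W',\ W\in d_j-1\}.$$
The genuinely delicate step, and the one I expect to be the main obstacle, is justifying that $\P\{W=W',\ W\in d_j-1\}$ equals $\P\{W\in d_j-1\}^2$ rather than $\sum_{k\in d_j-1}\P\{W=k\}^2$; these are \emph{not} equal in general.

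This discrepancy tells me the stated identity cannot hold term-by-term at the level of the blocks, so I would re-examine the intended meaning. The resolution must be that in a perfect PROS design with $m=s/n$, each block $d_j-1$ contributes, and the correct reading collapses the block structure: summing $\P\{W\in d_j-1\}^2$ over $j$ is \emph{not} the same as $\P\{W=W'\}=\sum_{k=0}^{s-1}\P\{W=k\}^2$ unless $m=1$ (the RSS case). Therefore I anticipate the lemma as stated is intended for, or the equality is cleanest in, the situation where the within-block refinement washes out; I would verify the precise hypotheses under which $\sum_{j}\P\{W\in d_j-1\}^2$ genuinely reduces to $\sum_{k}\P\{W=k\}^2=\P\{Y=Z\}$. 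The safe route is to prove the chain
$$\frac{1}{n}\sum_{j=1}^{n}f_{[d_j]}^2(x)=nf^2(x)\sum_{j=1}^{n}\P\{W\in d_j-1\}^2,$$
establish the binomial identity $\sum_{k=0}^{s-1}\P\{W=k\}^2=\P\{Y=Z\}$ directly from $\P\{Y=Z\}=\sum_k \P\{Y=k\}\P\{Z=k\}$, and then confront the gap between the block sum and the pointwise sum carefully, checking whether the paper's indexing convention (or an implicit $m=1$) reconciles the two; that reconciliation is where I would concentrate the work.
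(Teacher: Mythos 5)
Your reduction to the identity $\sum_{j=1}^{n}\P\{Y\in A_j\}^2=\P\{Y=Z\}$, with $A_j=\{(j-1)m,\dots,jm-1\}$ and $Y,Z$ i.i.d.\ $B(s-1,F(x))$, reproduces the first step of the paper's proof exactly, and your refusal to complete the final step is well founded: that identity is false whenever $m\ge 2$ and $0<F(x)<1$, and the paper's own argument commits precisely the fallacy you isolated. The paper asserts $\P^2\{(j-1)m\le Y\le jm-1\}=\P\{Y=Z,\ Z\in A_j\}$ and sums over $j$; but $\P\{Y\in A_j\}^2=\P\{Y\in A_j,\ Z\in A_j\}$, whereas $\P\{Y=Z,\ Z\in A_j\}=\sum_{k\in A_j}\P\{Y=k\}^2$, and the discrepancy
$$\sum_{j=1}^{n}\P\{Y\in A_j\}^2-\P\{Y=Z\}=\sum_{j=1}^{n}\sum_{k\ne l;\ k,l\in A_j}\P\{Y=k\}\P\{Y=l\}$$
is strictly positive as soon as some block contains two indices of positive probability. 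Concretely, for $s=4$, $n=m=2$, $F(x)=1/2$ one gets $\sum_{j}\P\{Y\in A_j\}^2=1/2$ while $\P\{Y=Z\}=5/16$; equivalently $\frac{1}{2}\sum_j f^2_{[d_j]}(x)=f^2(x)$ while the lemma claims $\frac{5}{8}f^2(x)$. The correct general statement is $\frac{1}{n}\sum_j f_{[d_j]}^2(x)=nf^2(x)\,\P\{Y\ \text{and}\ Z\ \text{lie in the same block}\ A_j\}$, which collapses to the stated lemma only in the RSS case $m=1$, where each block is a singleton. (Note that the RRV formula in Section 5.1 of the paper uses the block-sum-squared form and is therefore consistent with your computation, not with the lemma.)

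So there is no gap in your reasoning to repair; the gap is in the lemma and in the paper's proof of it. Since the true quantity exceeds $nf^2(x)\P\{Y=Z\}$, the reduction $\Delta(f,n)$ is understated rather than invalidated, but the Edgeworth-based $\sqrt{n/m}$ rate in Theorem \ref{Theorem 3.3}, which rests on the local-limit expansion of $\P\{Y=Z\}$, does not follow from the corrected identity: for fixed $n$ the block-coincidence probability does not decay like $s^{-1/2}$. The salvageable statement is the chain you wrote down, stopping at $nf^2(x)\sum_{j}\P\{Y\in A_j\}^2$; any further simplification to $\P\{Y=Z\}$ requires $m=1$.
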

\begin{proof}
Using  Remark \ref{Remark 2.1}, one can easily verify that
\begin{eqnarray*}
\frac{1}{n}\sum^{n}_{j=1}f^2_{[d_j]}(x)&=& nf^2(x)\sum_{j=1}^n\left[\sum^{jm}_{r=(j-1)m+1}{s-1\choose r-1}F(x)^{r-1}\overline{F}{(x)}^{s-r}\right]^2 \\
            &=& nf^2(x)\sum_{j=1}^{n}\P^2\{(j-1)m\le{Y}\le{jm-1}\},
\end{eqnarray*}
where $Y\sim B(s-1,F(x))$. Let $A_j=\{(j-1)m,\dots,jm-1\}$ for
$j=1,\dots,n$. Since $Z$ is  also distributed as a $B(s-1,F(x))$ distribution and it is 
independent of  $Y$, then
\begin{eqnarray*}
\sum_{j=1}^{n}\P^2\{(j-1)m\le{Y}\le{jm-1}\} &=& \sum_{j=1}^{n}\P\{Y=Z, Z \in A_j\} \nonumber\\
                                              &=& \P\{Y=Z, \bigcup_{j=1}^{n}(Z \in A_j)\} \nonumber\\
                                              &=& \P\{Y=Z\},
\end{eqnarray*}
since $A_j$s constitute  a disjoint partition of the set
$\{0,\dots,mn-1\}$ and this  completes the proof.
\end{proof}

By Lemma  \ref{Lemma 3.2}, we can derive an asymptotic result  which provides more insight into the rate of reduction in MISE in a perfect PROS sampling
procedure.

\begin{theorem} \label{Theorem 3.3} Under a   $\PROS_{\bf I}(n, L, s, D)$ sampling design,  we
have
$$\mbox{MISE}(\hat{f}_{\PROS})=\mbox{MISE}(\hat{f}_{\SRS})-\frac{1}{N}\left[\sqrt{\frac{n}{m}}\delta(f^2)-i_0(f^2)\right]-o(\frac{1}{Nm})+O(\frac{h^2}{N}),$$
where $\delta(f^2)=\int\frac{f^2(x)}{\sqrt{4\pi F(x)(1-F(x))}} dx$.
\end{theorem}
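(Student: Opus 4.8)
The plan is to begin from the exact reduction formula of Theorem~\ref{Theorem 3.2}, namely $\MISE(\hat f_{\PROS}) = \MISE(\hat f_{\SRS}) - \frac{1}{N}\Delta(f,n) + O(h^2/N)$, and to produce a sharp asymptotic expansion of $\Delta(f,n)$ as the set size $s = nm$ grows. Using the definition of $\Delta(f,n)$ together with Lemma~\ref{Lemma 3.2}, I would first rewrite
\[
\Delta(f,n) = \int\Big[\tfrac{1}{n}\sum_{j=1}^n f^2_{[d_j]}(x) - f^2(x)\Big]dx = n\int f^2(x)\,\P\{Y=Z\}\,dx - i_0(f^2),
\]
where $Y,Z\sim B(s-1,F(x))$ are i.i.d. The entire problem thus reduces to a precise evaluation of the collision probability $\P\{Y=Z\}=\sum_k \P\{Y=k\}^2$ for large $s$, followed by integration against $f^2$.

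The key step is a local limit (Edgeworth-type) expansion of $\P\{Y=Z\}$. Writing $p=F(x)$, $q=1-F(x)$ and $\sigma^2=(s-1)pq$, the local central limit theorem gives $\P\{Y=k\}\approx \sigma^{-1}\varphi\big((k-(s-1)p)/\sigma\big)$ with $\varphi$ the standard normal density; squaring and replacing the sum over the unit lattice by an integral yields $\P\{Y=Z\}\approx \sigma^{-1}\int \varphi^2(u)\,du = (2\sqrt{\pi}\,\sigma)^{-1}$, since $\int\varphi^2(u)\,du = 1/(2\sqrt{\pi})$. Equivalently, setting $W=Y-Z$, which is symmetric with variance $2\sigma^2$, and applying the lattice local limit theorem at $W=0$ gives the same value $\P\{W=0\}=\big(4\pi(s-1)pq\big)^{-1/2}(1+O(1/s))$; the symmetry of $W$ annihilates the first Edgeworth correction, so the relative error is $O(1/s)$. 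Inserting $s-1 = nm(1+O(1/s))$ and $n/\sqrt{nm}=\sqrt{n/m}$ then gives, pointwise in $x$,
\[
n\,\P\{Y=Z\} = \frac{n}{\sqrt{4\pi (s-1)F(x)(1-F(x))}}\big(1+O(1/s)\big) = \sqrt{\tfrac{n}{m}}\,\frac{1}{\sqrt{4\pi F(x)(1-F(x))}} + O(m^{-3/2}).
\]

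Multiplying by $f^2(x)$ and integrating then produces $n\int f^2(x)\,\P\{Y=Z\}\,dx = \sqrt{n/m}\,\delta(f^2) + o(1/m)$, so that $\Delta(f,n) = \sqrt{n/m}\,\delta(f^2) - i_0(f^2) + o(1/m)$; substituting into the Theorem~\ref{Theorem 3.2} identity and distributing the factor $1/N$ reproduces exactly the claimed expansion, with $o(1/m)$ becoming $-o(1/(Nm))$. The \emph{main obstacle} is analytic rather than algebraic: the local limit approximation and its $O(1/s)$ relative error are not uniform in $x$ and degrade in the tails, where $F(x)(1-F(x))\to 0$ and the binomial ceases to resemble a Gaussian. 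To pass rigorously from the pointwise expansion to the integrated statement I would impose enough smoothness and tail decay on $f$ to guarantee $\delta(f^2)=\int f^2/\sqrt{4\pi F(1-F)}<\infty$, split the $x$-integral into a central region and tail regions, and use a dominated-convergence/uniform-remainder argument to confirm that both the tail contributions and the integrated remainder are genuinely $o(1/m)$. Controlling this nonuniform remainder is the delicate point; once it is handled, the bookkeeping that collects $\sqrt{n/m}$, $i_0(f^2)$ and the error orders is routine.
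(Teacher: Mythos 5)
Your proposal follows essentially the same route as the paper's proof: reduce $\Delta(f,n)$ via Lemma~\ref{Lemma 3.2} to $n\int f^2(x)\,\P\{Y=Z\}\,dx - i_0(f^2)$, apply an Edgeworth/local-limit expansion of the collision probability $\P\{Y=Z\}=\bigl(4\pi s F(x)(1-F(x))\bigr)^{-1/2}+o(1/s)$, and then integrate using $n/\sqrt{s}=\sqrt{n/m}$. The only difference is that you spell out the local-CLT computation and explicitly flag the non-uniformity of the remainder where $F(x)(1-F(x))\to 0$ --- a point the paper's one-line proof passes over in silence --- so your argument is, if anything, more complete than the original.
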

\begin{proof}
Note that by the Edgeworth expansion of $\P\{Y=Z\}$ in Lemma \ref{Lemma 3.2}, we get
$$\P\{Y=Z\}=\frac{1}{\sqrt{4 s \pi F(x)(1-F(x))}}+o(\frac{1}{s}).$$
Consequently, we can write
$$\frac{1}{N}\Delta(f,n)=\frac{1}{N}\left[\sqrt{\frac{n}{m}}\delta(f^2)-i_0(f^2)\right]-o(\frac{1}{Nm}),$$
and this completes the proof.
\end{proof}

Theorem \ref{Theorem 3.3} shows that a perfect PROS density estimate reduces
the MISE of $\hat{f}_{\SRS}$ at order $O(N^{-1})$ and this
reduction is increased by $\sqrt{n/m}$ linearly whenever $\sqrt{\frac{n}{m}}\delta(f^2)-i_0(f^2)$ is non-negative (a sufficient condition is $n\geq m
\pi$).  When $m=1$, the result is reduced to the
 result for perfect RSS density estimate given by Chen (1999). In Section 5, we compare  $\hat{f}_{\PROS}$ with   $\hat{f}_{\SRS}$ and  $\hat{f}_{\RSS}$ in a more general case where  the sampling procedure can be
either perfect or imperfect.

\subsection{Density estimation under symmetry assumption}
In this section, we consider the problem of  kernel density estimation  based
on an imperfect PROS sample of size $N=nL$ under the assumption
that $f(\cdot)$ is symmetric. To this end, suppose that $f(x)$ is symmetric about $\mu$,  that is
$f(x)=f(2\mu-x)$ for all $x$. One can easily  verify  that
$f_{[d_j]}(x)=f_{[d_{n-j+1}]}(2\mu-x)$ provided 
$\alpha_{d_j,d_h}=\alpha_{d_{n-j+1},d_{n-h+1}}$ for all
$j,h=1,\dots,n$. Therefore, based on the sub-sample $X_{[d_j]i},
i=1,\dots,L$, it is reasonable to estimate $f_{[d_j]}(x)$ by
$$\hat{f}^*_{[d_j]}(x,\mu)=\frac{1}{2}\left(\hat{f}_{[d_j]}(x)+\hat{f}_{[d_{n-j+1}]}(2\mu-x)\right),$$
where $\hat{f}_{[d_j]}(x)$ is given in \eqref{fhat-j}. Consequently, the
estimate of $f(x)$ under the symmetry assumption can be defined by
\begin{eqnarray*}
\hat{f}^*_{\PROS}(x,\mu)&=&\frac{1}{n}\sum_{j=1}^n\hat{f}^*_{[d_j]}(x,\mu)\\
                   &=&\frac{1}{2}\left(\hat{f}_{\PROS}(x)+\hat{f}_{\PROS}(2\mu-x)\right).
\end{eqnarray*}

Now, we consider the mean and the variance of $\hat{f}^*_{\PROS}(x,\mu)$ in the
following theorem.

\begin{theorem} \label{Theorem 3.4} Suppose that $f(x)$ is symmetric
about  $\mu$ and
$\alpha_{d_j,d_h}=\alpha_{d_{n-j+1},d_{n-h+1}}$ for all
$j,h=1,\dots,n$. Then,  based on an imperfect  $\PROS_{\balpha}(n, L, s, D)$  sample of size
$N=nL$, we have
\begin{itemize}
\item[(i)] $
\E\left[\hat{f}^*_{\PROS}(x,\mu)\right]=\E\left[\hat{f}_{\PROS}(x)\right]$,
\item[(ii)] $var\left(\hat{f}^*_{\PROS}(x,\mu)\right)\leq var\left(\hat{f}_{\PROS}(x)\right).$
\end{itemize}
\end{theorem}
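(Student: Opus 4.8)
The plan is to exploit the symmetry identity $f_{[d_j]}(x)=f_{[d_{n-j+1}]}(2\mu-x)$, which holds under the stated assumptions on $f$ and $\balpha$, together with the symmetry of the kernel $K$, in order to reduce both claims to statements about $\hat{f}_{\PROS}$ alone. The crucial intermediate fact I would establish first is that reflecting the evaluation point through $\mu$ leaves both the mean and the variance of $\hat{f}_{\PROS}$ unchanged, i.e.
$$\E[\hat{f}_{\PROS}(2\mu-x)]=\E[\hat{f}_{\PROS}(x)] \quad \text{and} \quad var(\hat{f}_{\PROS}(2\mu-x))=var(\hat{f}_{\PROS}(x)).$$

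To prove these two identities I would start from $\hat{f}_{\PROS}(y)=\frac{1}{n}\sum_{j=1}^n \hat{f}_{[d_j]}(y)$ and read the symmetry identity as the distributional statement that $2\mu - X_{[d_{n-j+1}]}$ and $X_{[d_j]}$ have the same law. Substituting $t=2\mu-v$ in $\E[\hat{f}_{[d_j]}(2\mu-x)]=\int h^{-1}K((2\mu-x-t)/h)f_{[d_j]}(t)\,dt$ and using $K(-u)=K(u)$ converts the integrand into $h^{-1}K((x-v)/h)f_{[d_{n-j+1}]}(v)$; summing over $j$ and reindexing by $k=n-j+1$ recovers $\E[\hat{f}_{\PROS}(x)]$. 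The same substitution applied to the second moment $\E[h^{-2}K^2((2\mu-x-X_{[d_j]})/h)]$ shows the per-subset variances satisfy $var(\hat{f}_{[d_j]}(2\mu-x))=var(\hat{f}_{[d_{n-j+1}]}(x))$, and since the subsamples are independent across $j$, summing and reindexing yields the claimed variance identity. Part (i) is then immediate, because $\hat{f}^*_{\PROS}(x,\mu)=\frac{1}{2}(\hat{f}_{\PROS}(x)+\hat{f}_{\PROS}(2\mu-x))$ and the two expectations coincide.

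For part (ii) I would expand
$$var(\hat{f}^*_{\PROS}(x,\mu))=\tfrac{1}{4}\big[var(\hat{f}_{\PROS}(x))+var(\hat{f}_{\PROS}(2\mu-x))+2\,cov(\hat{f}_{\PROS}(x),\hat{f}_{\PROS}(2\mu-x))\big].$$
By the variance identity above the first two terms each equal $var(\hat{f}_{\PROS}(x))$, so the expression reduces to $\tfrac{1}{2}var(\hat{f}_{\PROS}(x))+\tfrac{1}{2}cov(\hat{f}_{\PROS}(x),\hat{f}_{\PROS}(2\mu-x))$. The Cauchy--Schwarz inequality bounds the covariance by $\sqrt{var(\hat{f}_{\PROS}(x))\,var(\hat{f}_{\PROS}(2\mu-x))}=var(\hat{f}_{\PROS}(x))$, whence $var(\hat{f}^*_{\PROS}(x,\mu))\le var(\hat{f}_{\PROS}(x))$.

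I expect the main obstacle to be the bookkeeping in the variance identity rather than any deep step: the individual subset terms do not match up, since subset $d_j$ evaluated at $2\mu-x$ corresponds to subset $d_{n-j+1}$ evaluated at $x$ and not to itself, so equality emerges only after reindexing the full sum, and one must apply the change of variable $t=2\mu-v$ consistently to both the first and the second moments. Everything else, namely linearity of expectation, independence across subsets, and the single application of Cauchy--Schwarz, is routine.
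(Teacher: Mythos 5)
Your proposal is correct and follows essentially the same route as the paper: establish that reflecting the evaluation point through $\mu$ preserves both the mean and the variance of $\hat{f}_{\PROS}$, deduce part (i) immediately, and obtain part (ii) by expanding the variance of the average and bounding the covariance term with the Cauchy--Schwarz inequality. You simply supply more detail (the change of variable, kernel symmetry, and reindexing $j\mapsto n-j+1$) for the reflection identities that the paper asserts without proof.
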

\begin{proof}
Part $(i)$ is easily proved by the fact that
$\E\left[\hat{f}_{\PROS}(x)\right]=\E\left[\hat{f}_{\PROS}(2\mu-x)\right]$
under the symmetry assumption and Theorem  \ref{kernel-properties}. For  part $(ii)$,
note that for all $x$,  we have
$$ var(\hat{f}_{\PROS}(x))=var(\hat{f}_{\PROS}(2\mu-x)),$$ and consequently
\begin{eqnarray*}
var\left(\hat{f}^*_{\PROS}(x,\mu)\right)=\frac{1}{2}var\left(\hat{f}_{\PROS}(x)\right)+\frac{1}{2}cov\left(\hat{f}_{\PROS}(x),\hat{f}_{\PROS}(2\mu-x)\right).
\end{eqnarray*}
The result holds by using the Cauchy-Schwartz inequality.
\end{proof}

Theorem \ref{Theorem 3.4} shows that $\hat{f}^*_{\PROS}(x,\mu)$ has the same
bias as $\hat{f}_{\PROS}(x)$; however, it has smaller variance.
Therefore, under the symmetry assumption $\hat{f}^*_{\PROS}(x,\mu)$
has smaller MISE  and it dominates
$\hat{f}_{\PROS}(x)$. Note also  that if the symmetry point  $\mu$ is unknown, it can be estimated by
the PROS sample  to obtain a  plug-in estimator as  $\hat{f}^*_{\PROS}(x,\hat{\mu})$. Based on
a PROS sample of size $N$, several non-parametric estimators of
$\mu$ can be defined as follows
\begin{eqnarray}
\nonumber
\hat{\mu}_{1}&=&\frac{1}{N}\sum_{i=1}^L\sum_{j=1}^nX_{[d_j]i},\\ \nonumber
\hat{\mu}_{2}&=&median\bigl\{X_{[d_j]i},\quad i=1,\dots,L;j=1,\dots,n\bigr\}, \\ \nonumber 
\hat{\mu}_{3}&=&median\Biggl\{\frac{X_{[d_j]i}+X_{[d_k]l}}{2},\quad i,l=1,\dots,L ; j,k=1,\dots,n\Biggr\}, \\
\hat{\mu}_{4}&=&\frac{1}{L}\sum_{i=1}^Lmedian\bigl\{X_{[d_j]i},\quad j=1\dots,n\bigr\}.
\end{eqnarray}
Among these estimators $\hat{\mu}_{1}$ is the PROS sample mean which  is not robust against outliers, 
while $\hat{\mu}_{2}$, $\hat{\mu}_{3}$, and $\hat{\mu}_{4}$ are
robust estimators of $\mu$. Note that  $\hat{\mu}_{3}$ is a
Hodges-Lehmann type estimator of the location parameter. In Section 5, we consider the effect of these estimators on the MISE of $\hat{f}^*_{\PROS}(x,\hat{\mu})$.

\section{Estimating the misplacement probabilities}

So far we assumed that   the misplacement  probability  matrix $\balpha$ defined in \eqref{alpha} is given. In practice, the misplacement probabilities $\alpha_{d_j, d_h}$ are unknown and they should always be estimated.  This is a very important  problem as the performance of our kernel density estimator depends on the estimated values of $\alpha_{d_j, d_h}$. In this section, we use a modification of the EM algorithm of Arsalan and Ozturk (2013) to estimate $\alpha_{d_j,d_h}$'s.  We present the result for  a symmetric   misplacement probability matrix $\balpha$ with $\alpha_{d_j,d_h}=\alpha_{d_h,d_j}$. However, results for more general $\balpha$ can be obtained  by slight modifications of our results.  Let
$$\pi_{[d_j,d_h]i}=\frac{\alpha_{d_j,d_h}\bar{\beta}_{h}(F(X_{[d_j]i}))}{\sum_{h=1}^n\alpha_{d_j,d_h}\bar{\beta}_{h}(F(X_{[d_j]i}))},$$
in which
$$\bar{\beta}_{h}(F(X_{[d_j]i}))=\frac{1}{m}\sum_{u \in d_h}\beta_{u,s-u+1}(F(X_{[d_j]i})),$$
and $\beta_{a,b}(\cdot )$ denotes the pdf of a beta distribution with parameters $a$ and $b$.  Following Arslan and Ozturk (2013) we   estimate the misplacement probability matrix $\balpha$   through an iterative method. To this end, we start with an initial estimate of $\balpha$ say $\balpha^{(0)}$ which can be chosen to be a matrix associated with random subsetting with  $\alpha_{d_j, d_h}= \frac{1}{n}$.  Then,  we use the following iterative method:   
\begin{enumerate}
\item[(i)] For a given $\boldsymbol{\alpha}^{(t)}$ at step $t$ of the iterative process, calculate
$$w^{(t)}_{h',h}=\sum_{i=1}^L\pi^{(t)}_{[d_{h'},d_h]i}.$$

\item[(ii)] Calculate $$Q^{(t)}(\boldsymbol{\alpha})=\sum_{h=1}^n\sum_{h'=1}^nw^{(t)}_{h',h}\log(\alpha_{d_{h'},d_h}).$$

\item[ (iii)] Maximize $Q^{(t)}(\balpha)$ under the restrictions that the misplacement probabilities are symmetric and doubly stochastic and obtain the new $\boldsymbol{\alpha}$ and call it $\boldsymbol{\alpha}^{(t+1)}$. This can be done via a Lagrange multipliers method  to enforce the constraints as follows
\begin{align*}\mathcal{L}^{(t)}(\balpha, \blambda) 
&= \sum_{h=1}^n\left\{  \sum_{h'=1}^{h-1} w ^{(t)}_{h, h'} \log (\alpha_{d_{h'},d_h}) +  \sum_{h'=h}^{n} w ^{(t)}_{h, h'} \log (\alpha_{d_{h},d_{h'}})\right\} \\
&   +  \sum_{h=1}^n n\lambda_h\left\{  \sum_{h'=1}^{h-1}  \alpha_{d_{h'},d_h} +  \sum_{h'=h}^{n} \alpha_{d_{h},d_{h'}} -1\right\}, 
\end{align*}
where $\blambda=(\lambda_1, \ldots, \lambda_n)$. The details of this process are given in Ozturk (2010) as well as Arslan and Ozturk (2013). 
\item[(iv)] Repeat Steps (i)-(iii) till the sum of absolute error (SAE) of $\boldsymbol{\alpha}^{(t)}$ and $\boldsymbol{\alpha}^{(t+1)}$ is less than a predetermined value,  say $\delta$, that is 
$$\mbox{SAE}(\boldsymbol{\alpha}^{(t)},\boldsymbol{\alpha}^{(t+1)})=\sum_{i=1}^{n(n+1)/2}|\mathbf{\alpha}^{(t)}_i-\mathbf{\alpha}^{(t+1)}_i|\leq\delta.$$
\end{enumerate}
In practice, to calculate $\pi^{(t)}_{[d_j,d_h]i}$,  one can replace  $F(\cdot)$  by an estimate of $F$ such as the  empirical distribution function, i.e.
$$\hat{F}_{\PROS}(x)=\frac{1}{nL}\sum_{i=1}^L\sum_{j=1}^n I(X_{[d_j]i}\leq x).$$

\noindent To investigate the accuracy of our method, we perform  a small simulation  study when $n=m=3$, and $L=4,10$. Following Arslan and Ozturk (2013), we consider three misplacement probability matrices  $\balpha_1$, $\balpha_2$, and $\balpha_3$, where
$$\balpha_1=\left[ \begin{array}{ccc}
1 & 0 & 0 \\
0 & 1 & 0 \\
0 & 0 & 1 \end{array}
\right],
\balpha_2=\left[ \begin{array}{ccc}
0.900 & 0.075 & 0.025 \\
0.075 & 0.850 & 0.075 \\
0.025 & 0.075 & 0.900 \end{array}
\right],\quad\text{and} \quad 
\balpha_3=\left[ \begin{array}{ccc}
0.75 & 0.15 & 0.10 \\
0.15 & 0.70 & 0.15 \\
0.10 & 0.15 & 0.75 \end{array}
\right].
$$
\noindent We generate PROS  samples when the underlying population distributions are the standard Normal and Exponential distributions. For each distribution, the misplacement probabilities are estimated by using our proposed  iterative method with the help of  the  package ``Rsolnp" (Ghalanos and Theussl (2012) and Ye (1987)) in R  with $\delta=10^{-4}$. This process is repeated  100 times and the average of these estimates are used  as  the estimates of the misplacement probabilities. The values of the estimates and their corresponding  standard deviations (given in  parentheses)  are shown in Table \ref{table:Table1}. Note that following the  properties of $\balpha$ we  present the results for $\alpha_{d_{1}, d_{1}}$, $\alpha_{d_{1}, d_{2}}$, $\alpha_{d_{1},{d_3}}$, $\alpha_{d_{2},d_{2}}$, and $\alpha_{d_{2},d_{3}}$.
We observe  that the estimates are close to the true values and they have satisfactory biases given the fact  that  our proposed method is a fully nonparametric procedure and  the  sample size   is very small. We observe that  our proposed procedure slightly underestimates  $\alpha_{d_j, d_j}$,  especially  for $\balpha_1$. This is because  the perfect ranking model is at the boundary of the parameter space and as noted by Arslan and Ozturk (2013) the estimates are truncated whenever they exceed 1 due to the constraints on misplacement probabilities. However, the biases and standard deviations get smaller as the  cycle size increases.

\tabcolsep=0.11cm
\begin{table}[H]
\caption{{ The estimates and standard deviations (in  parentheses) of estimated misplacement probabilities when $n=m=3$, $L=4,10$  and the underlying distributions are the standard Normal and Exponential distributions.}}\label{table:Table1}
\bigskip
\centering
\small
\begin{tabular}{l c c c c c c c}
\hline \hline
  Distributions & $\balpha$ & $L$ & $\hat{\alpha}_{d_{1},d_{1}}$ & $\hat{\alpha}_{d_{1}, d_{2}}$ & $\hat{\alpha}_{d_{1}, d_{3}}$ & $\hat{\alpha}_{d_{2}, d_{2}}$ & $\hat{\alpha}_{d_{2}, d_{3}}$ \\
\hline
              & $\balpha_1$  & 4 & 0.9640(0.099) & 0.0355(0.097) & 0.0006(0.005) & 0.9179(0.155) & 0.0466(0.109) \\[-1ex]
              &                      & 10& 0.9775(0.051) & 0.0204(0.051) & 0.0021(0.012) & 0.9565(0.070) & 0.0232(0.054) \\[0.1cm]
\cline{2-8}
              & $\balpha_2$  & 4 & 0.8934(0.156) & 0.0717(0.140) & 0.0348(0.087) & 0.8247(0.198) & 0.1036(0.143) \\[-1ex]
Normal        &                      & 10& 0.8961(0.106) & 0.0759(0.096) & 0.0280(0.044) & 0.8356(0.153) & 0.0886(0.102) \\[0.1cm]
\cline{2-8}
              & $\balpha_3$  & 4 & 0.7578(0.207) & 0.1535(0.188) & 0.0888(0.116) & 0.6583(0.274) & 0.1882(0.217) \\[-1ex]
              &                      & 10& 0.7365(0.140) & 0.1605(0.137) & 0.1030(0.082) & 0.6902(0.174) & 0.1493(0.125) \\[0.1cm]
\cline{1-8}
              & $\balpha_1$  & 4 & 0.9384(0.134) & 0.0612(0.134) & 0.0004(0.004) & 0.8751(0.185) & 0.0637(0.139) \\[-1ex]
              &                      & 10& 0.9730(0.052) & 0.0265(0.052) & 0.0005(0.005) & 0.9501(0.068) & 0.0234(0.048) \\[0.1cm]
\cline{2-8}
              & $\balpha_2$  & 4 & 0.8918(0.167) & 0.0833(0.164) & 0.0249(0.059) & 0.7880(0.225) & 0.1287(0.158) \\[-1ex]
Exponential   &                      & 10& 0.8802(0.107) & 0.0891(0.104) & 0.0307(0.042) & 0.8014(0.161) & 0.1095(0.112) \\[0.1cm]
\cline{2-8}
              & $\balpha_3$  & 4 & 0.7486(0.232) & 0.1475(0.187) & 0.1039(0.136) & 0.6535(0.254) & 0.1990(0.204) \\[-1ex]
              &                      & 10& 0.7515(0.132) & 0.1513(0.125) & 0.0972(0.090) & 0.6893(0.173) & 0.1595(0.128)\\[0.1cm]
\hline
\end{tabular}
\end{table}

\section{ Simulation Study}

In this section, we compare  the performance of $\hat{f}_{\PROS}$ with its SRS
and RSS counterparts. We first discuss the asymptotic reduction
rate in  the variance (RRV) of $\hat{f}_{\SRS}$ and $\hat{f}_{\RSS}$
by using   a  PROS density estimate $\hat{f}_{\PROS}$. We then compare the MISE$(\hat{f}_{\PROS})$ with MISE$(\hat{f}_{\SRS})$ and MISE$(\hat{f}_{\RSS})$.
Finally,  we consider the effect of estimating the symmetry point on the MISE of $\hat{f}^*_{\PROS}(x,\hat{\mu})$ when we assume that the underlying distribution is symmetric.
\subsection{Comparing the reduction in variances}
Using  \eqref{var-pros},  the RRV  of $\hat{f}_{\PROS}$ over 
  $\hat{f}_{\SRS}$ that measures at what rate $\hat{f}_{\PROS}$
reduces the asymptotic variance of $\hat{f}_{\SRS}$ can be defined
as
\begin{eqnarray*}
\mbox{RRV}(\hat{f}_{\PROS},\hat{f}_{\SRS})&=&\frac{\frac{1}{n}\sum_{j=1}^n{f^2_{[d_j]}(x)}-f^2(x)}{\frac{1}{n}\sum_{j=1}^n{f^2_{[d_j]}(x)}}\\
   &=&1-{\left[n\sum_{j=1}^{n}{\left\{\sum_{r=1}^{n}{\sum_{u \in d_r}{\alpha_{d_j,d_r}{s-1 \choose u-1}p^{u-1}(1-p)^{s-u}}}\right\}^2}\right]}^{-1},
\end{eqnarray*}
where $p=F(x)$. It is clear that $\mbox{RRV}(\hat{f}_{\PROS},\hat{f}_{\SRS})$ is a nonparametric measure which  does not depend on
the  underlying distribution  function. Note  that
if RRV$(\hat{f}_{\PROS},\hat{f}_{\SRS})=0$ at certain percentiles
$p$, then $\hat{f}_{\PROS}$ and $\hat{f}_{\SRS}$ have equal
 variances at these percentiles asymptotically. However, if
RRV$(\hat{f}_{\PROS},\hat{f}_{\SRS})=\beta>0$, then $\hat{f}_{\PROS}$  reduces the variance of  $\hat{f}_{\SRS}$ at order $O(N^{-1})$
and this reduction increases linearly at rate $\beta$. 
%
%
The values of RRV$(\hat{f}_{\PROS},\hat{f}_{\SRS})$ can be easily
calculated when $n$, $m$ and the misplacement probabilities
$\alpha_{d_i,d_j}$ are given. For $m=3$ and $n=2,\dots,7$ and
misplacement probabilities $\alpha_{d_i,d_i}=\alpha_0$ and
$\alpha_{d_i,d_j}=(1-\alpha_0)/(n-1)$ for $i\neq j$, the values of
RRV$(\hat{f}_{\PROS},\hat{f}_{\SRS})$ are presented in Figure \ref{fig:Figure1} when
$\alpha_0=0, 0.3, 0.7, 1$.

We observe that  for all values of $\alpha_0$ the amount   of RRV
increases symmetrically as $p$ gets away  from 0.5 to 0 and 1. This
shows that the best performance of  the  PROS density estimate over its SRS
counterpart  happens at the tail of the distribution. When $n=2$, the
PROS and SRS estimates have equal precision at $p=0.5$; otherwise,
the PROS estimate reduces the variance of SRS estimate. When
$\alpha_0=0$, the value of RRV decreases when $n$ increases. This
suggests using a small sample size when the misplacement
probabilities (ranking errors) are large. We also note that RRV increases as
both $\alpha_0$ and $n$ increase.   The best performance of PROS design over SRS design happens when the subsetting is either perfect or it is moderately good, that is when   $\alpha_0=1$ or $\alpha_0=0.7$, respectively. 
Similar results  are  observed when  $m=5$ which we do not present here.

 To obtain  the RRV of $\hat{f}_{\PROS}$ over  $\hat{f}_{\RSS}$
 we first note that (see  Chen (1999))
$$var(\hat{f}_{\RSS}(x))=var(\hat{f}_{\SRS}(x))-\frac{1}{N}\left[\frac{1}{n}\sum_{r=1}^n{f^2_{[r]}(x)}-f^2(x)\right]+O(\frac{h^2}{N}).$$
 Now,  using \eqref{var-pros}, the
RRV of $\hat{f}_{\PROS}$ when $\hat{f}_{RSS}$ is 
defined as
$$\mbox{RRV}(\hat{f}_{\PROS},\hat{f}_{RSS})=\frac{\frac{1}{n}\sum_{j=1}^n{f^2_{[d_j]}(x)}-\frac{1}{n}\sum_{r=1}^n{f^2_{[r]}(x)}}{\frac{1}{n}\sum_{j=1}^n{f^2_{[d_j]}(x)}},$$
where
\begin{eqnarray*}
\frac{1}{n}\sum_{r=1}^n{f^2_{[r]}(x)}=nf^2(x)\sum_{r=1}^{n}{\left[\sum_{k=1}^{n}{p_{rk}{n-1
\choose k-1}p^{k-1}(1-p)^{n-k}}\right]^2},
\end{eqnarray*}
in which $p=F(x)$ and $p_{rk}$ for $r,k=1,\dots,n$ are
the ranking error  probabilities in an imperfect RSS procedure.

\begin{figure}[H] \label{fig:Figure1} 
  \begin{minipage}[b]{0.45\linewidth}\centering
    \includegraphics[width=1\linewidth]{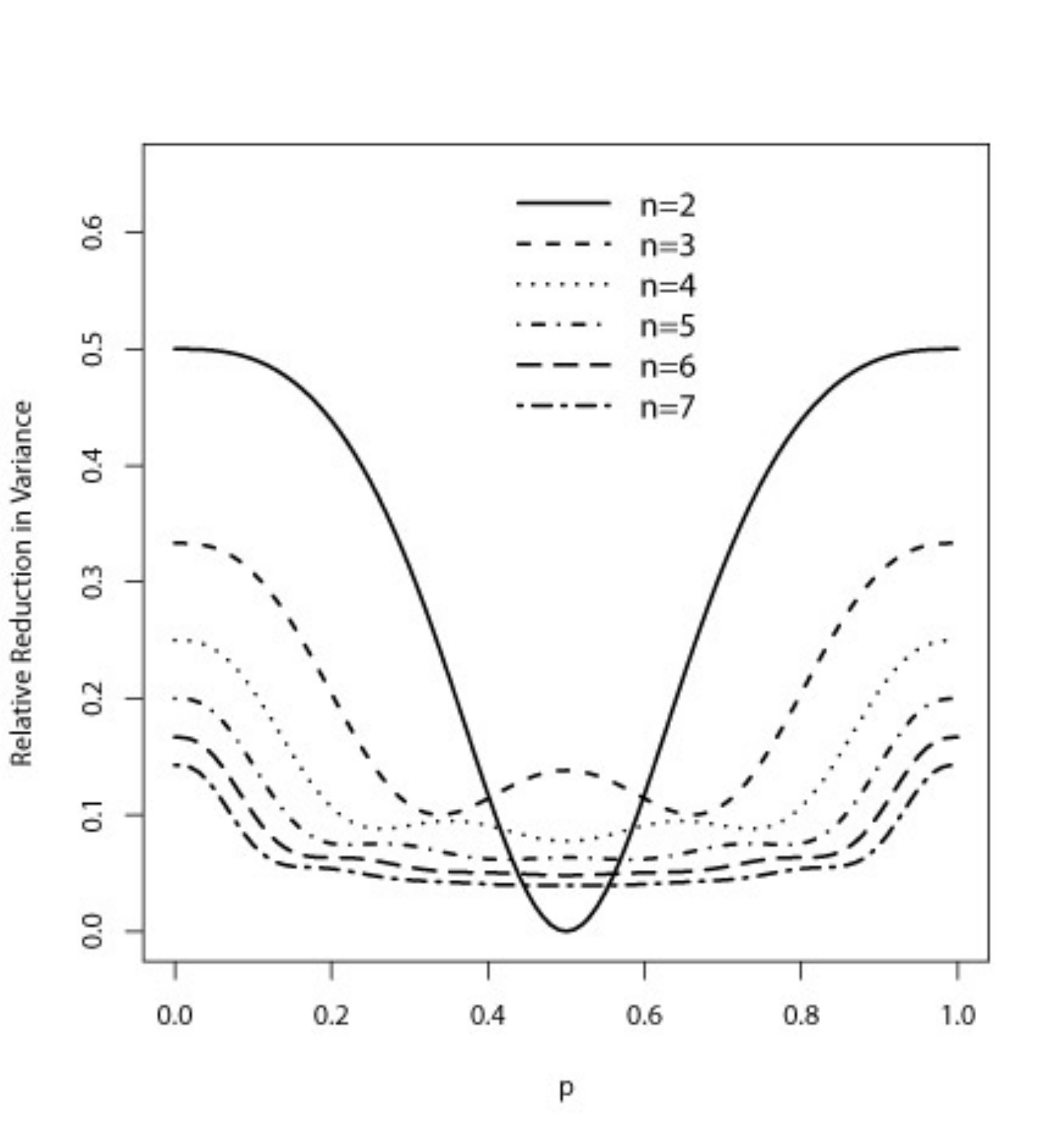} \\
   (a)~ $\alpha_0=0$
  \end{minipage} 
  \begin{minipage}[b]{0.45\linewidth}\centering
    \includegraphics[width=1\linewidth]{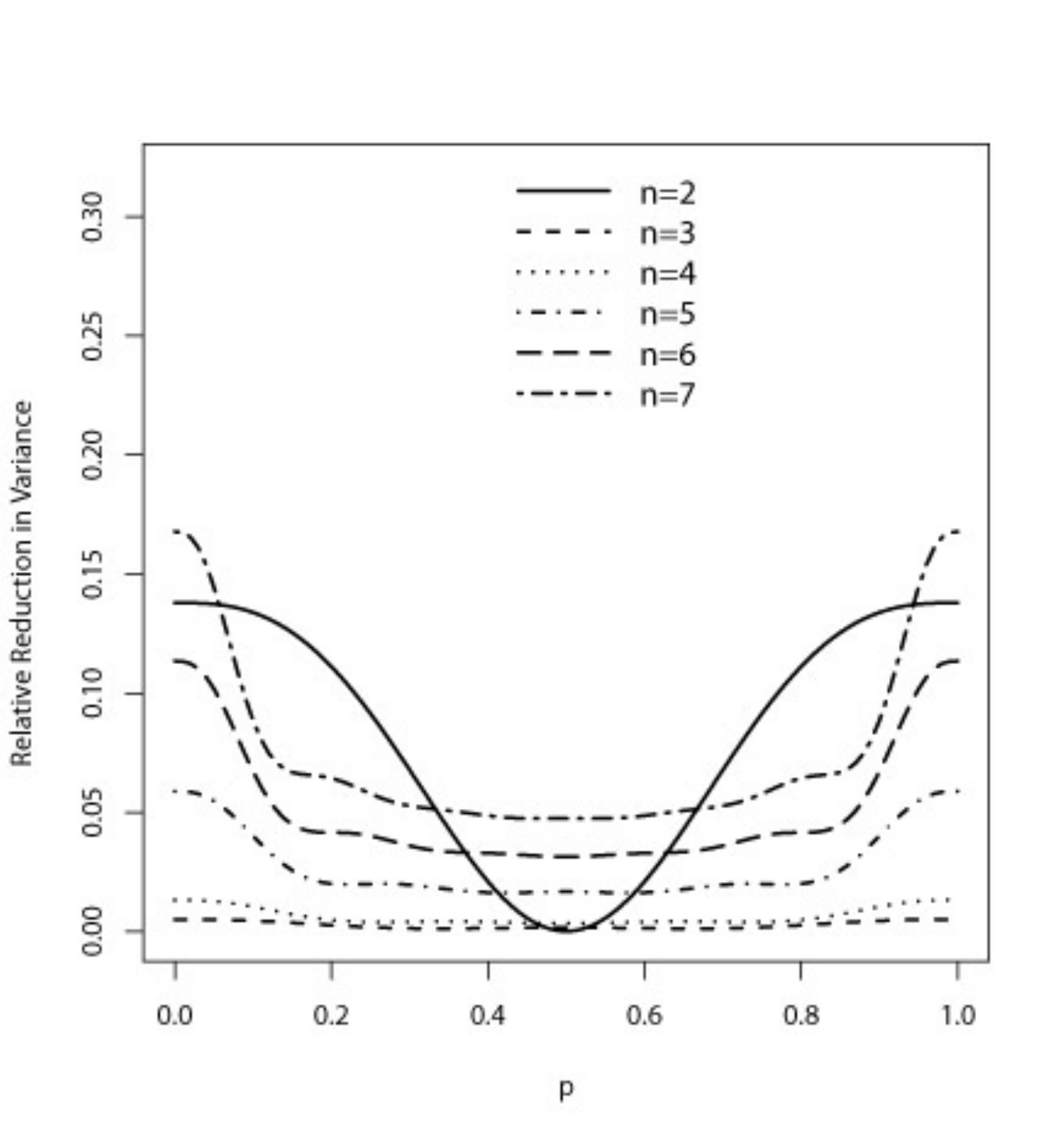} \\
  $\alpha_0= 0.3$
  \end{minipage} 
  \begin{minipage}[b]{0.45\linewidth}\centering
    \includegraphics[width=1\linewidth]{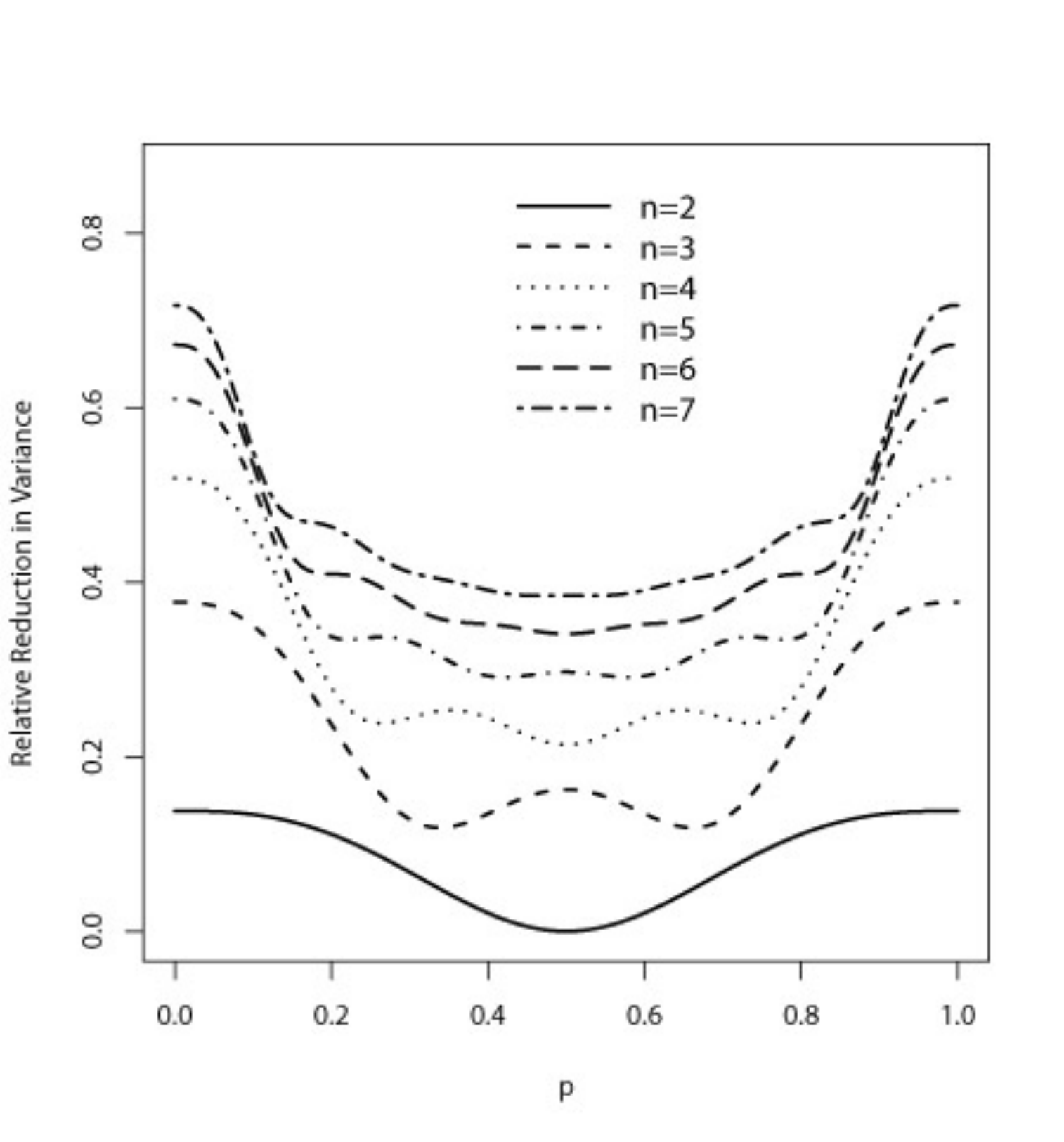}  \\
    (c)~$\alpha_0= 0.7$
  \end{minipage}
  \hfill
  \begin{minipage}[b]{0.45\linewidth}\centering
    \includegraphics[width=1\linewidth]{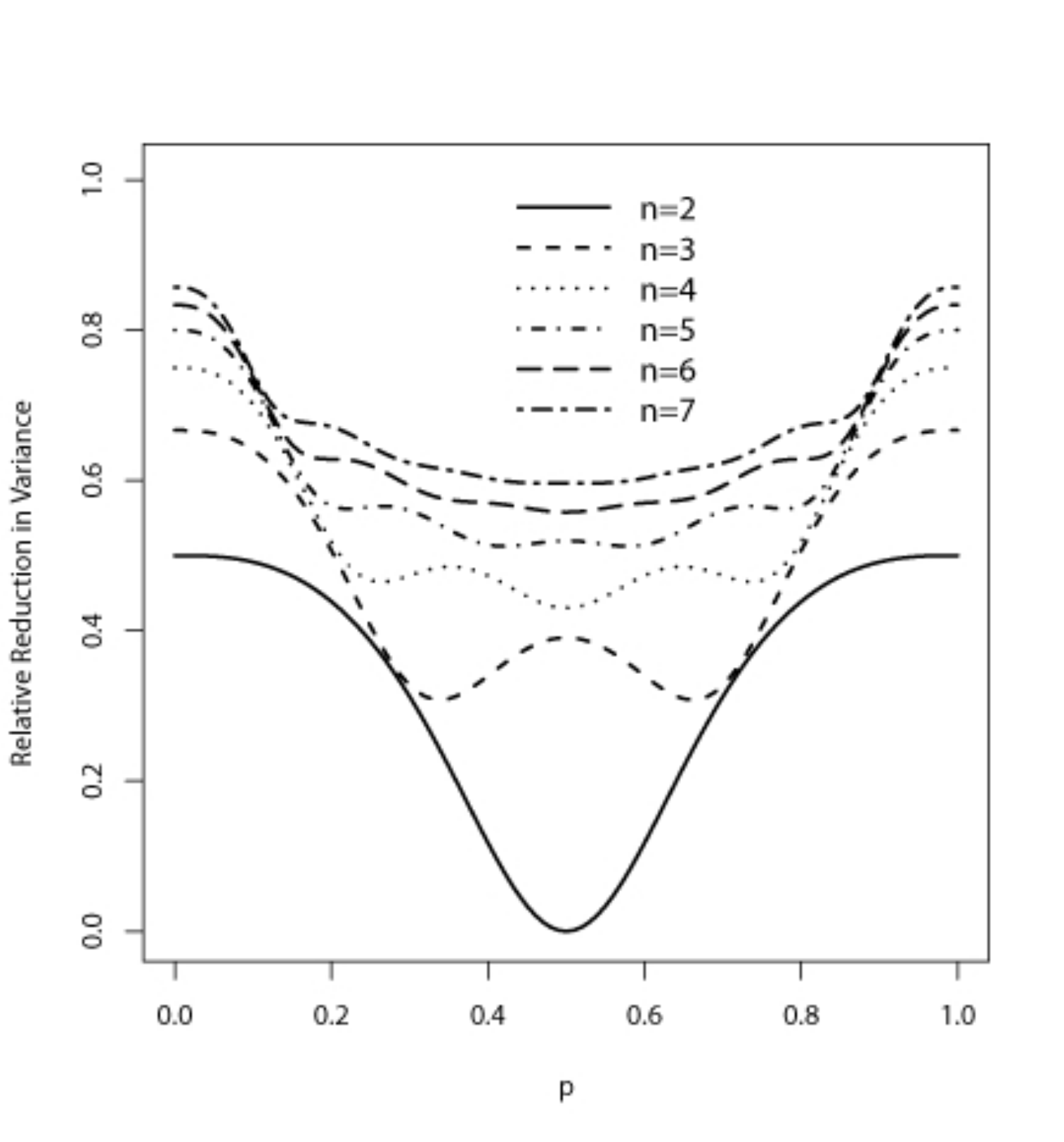}  \\
    (d)~$\alpha_0= 0.3$
  \end{minipage} 
  \caption{\large{RRV$(\hat{f}_{PROS},\hat{f}_{SRS})$ for $n=2,\dots,7$
when $m=3$ and $\alpha_0=0,0.3,0.7,1$.}}
\end{figure}
%
For $m=3$, $n=2,\dots,7$ and the ranking error 
probabilities equal to the misplacement error probabilities in  its corresponding   imperfect PROS design, the values of RRV$(\hat{f}_{\PROS},\hat{f}_{\RSS})$ are
presented in Figure \ref{fig:Figure2}. It is seen that the values of RRV are  symmetric about $p=0.5$. When $\alpha_0=0$, the RRV
decreases as $n$ increases, and  by increasing  $\alpha_0$  RRV of $\hat{f}_{\RSS}$ increases as $n$ increases. The RRV of RSS
is zero when $p=0$ and 1 (when $n=2$, the value of RRV is also
zero at $p=0.5$). This means that the PROS and RSS estimates have
the same precision at these percentiles. The maximum value of RRV is more than 35 percent when the sampling procedure
is perfect and $n=7$. Similar results are obtained when $m=5$ which are not presented  here. 

\begin{figure}[H] \label{fig:Figure2} 
  \begin{minipage}[b]{0.45\linewidth}\centering
    \includegraphics[width=1\linewidth]{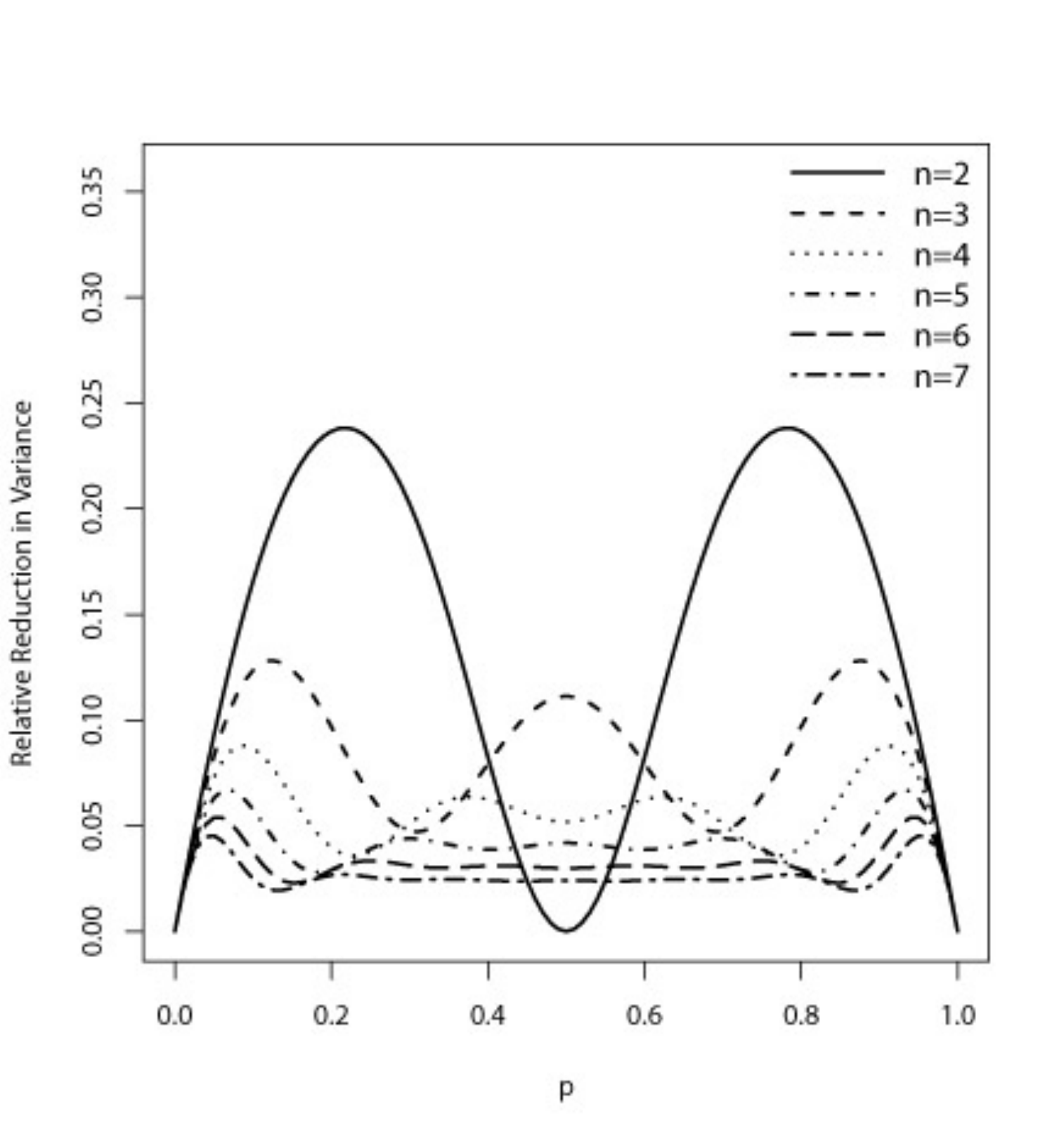} \\
  (a)   $\alpha_0=0$ 
  \end{minipage} 
  \begin{minipage}[b]{0.45\linewidth}\centering
    \includegraphics[width=1\linewidth]{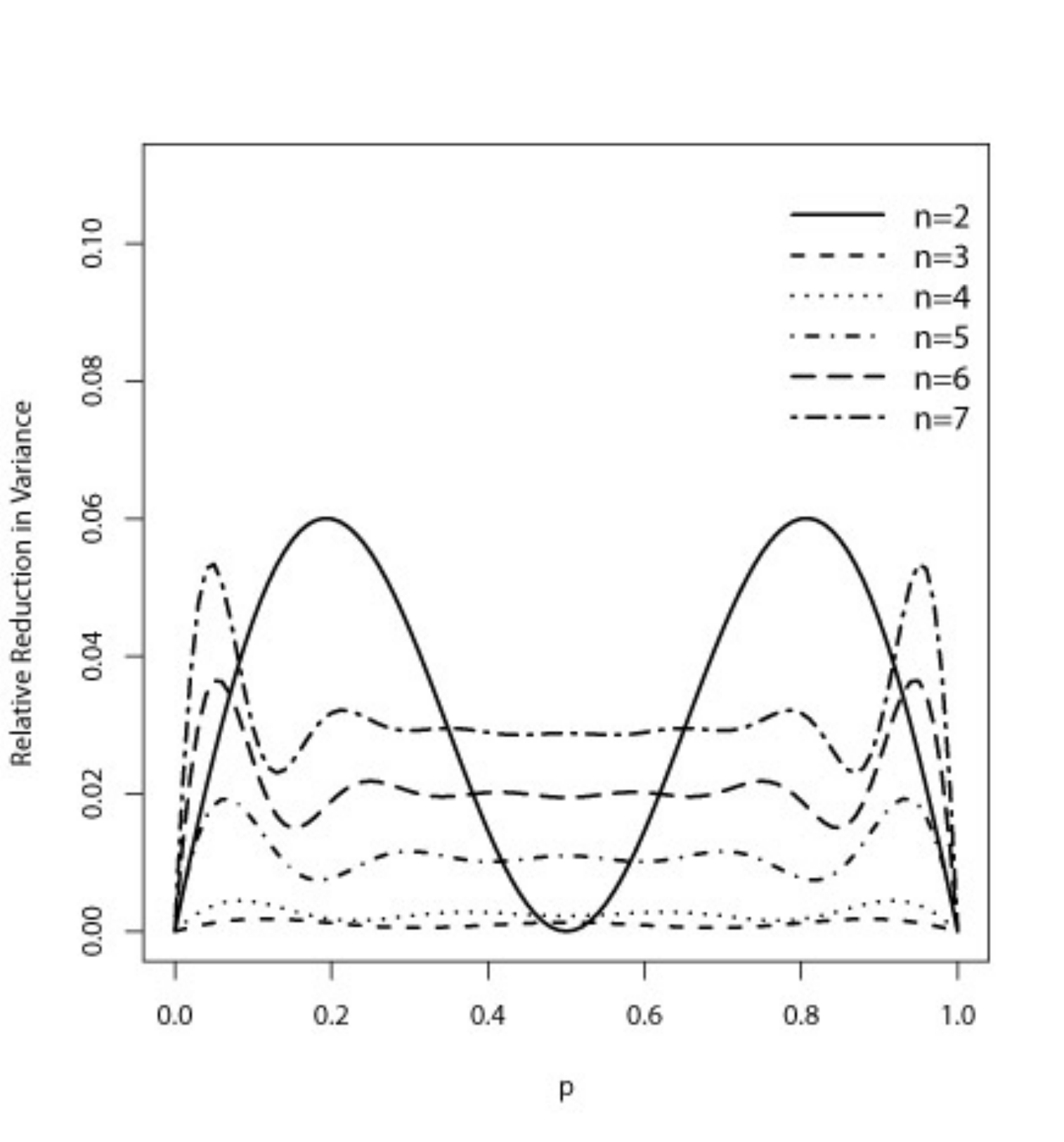} \\
  (b) $\alpha_0=0.3$ 
  \end{minipage} 
  \begin{minipage}[b]{0.45\linewidth}\centering
    \includegraphics[width=1\linewidth]{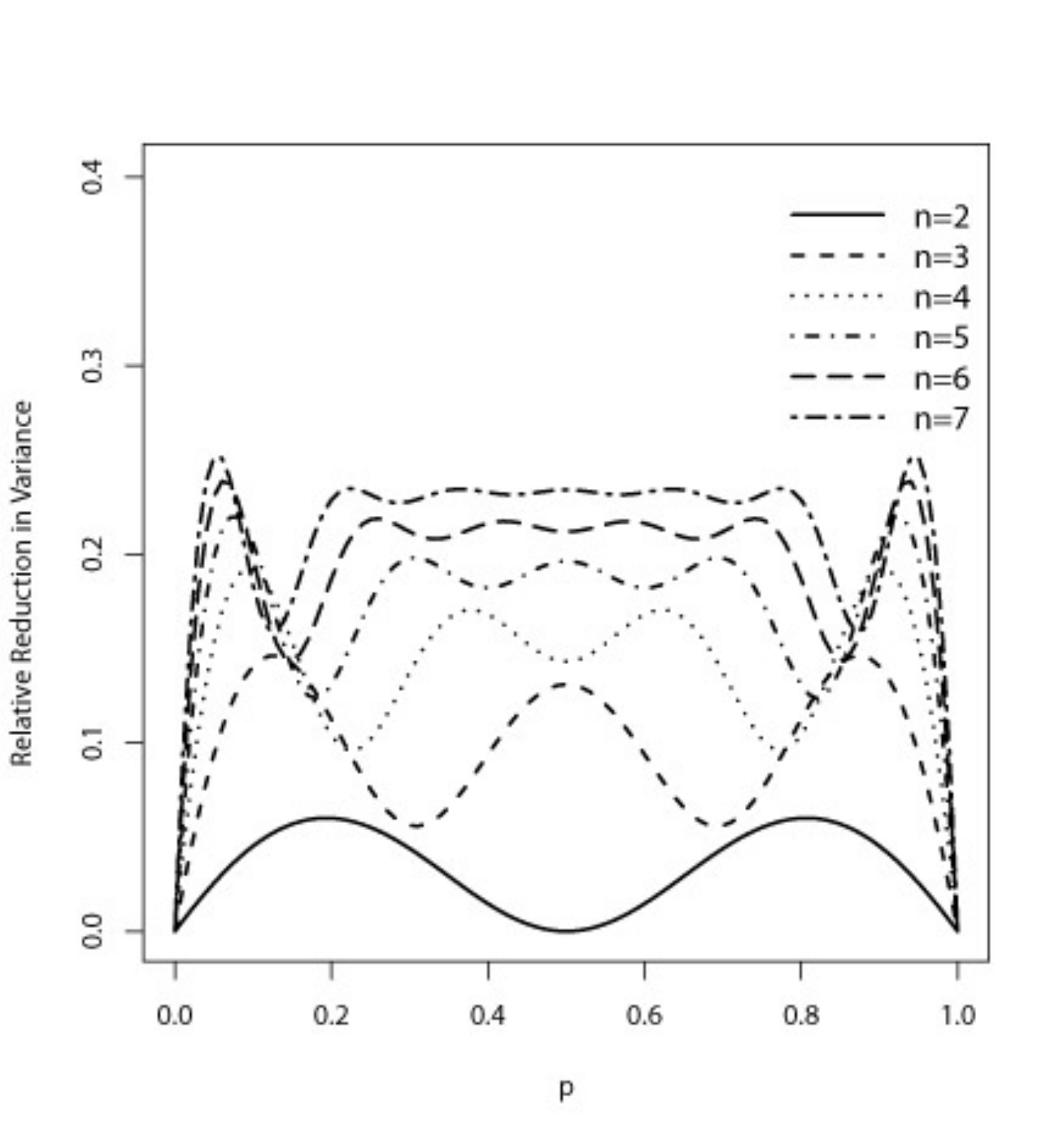} 
    (c) $\alpha_0=0.7$
  \end{minipage}
  \hfill
  \begin{minipage}[b]{0.45\linewidth}\centering
    \includegraphics[width=1\linewidth]{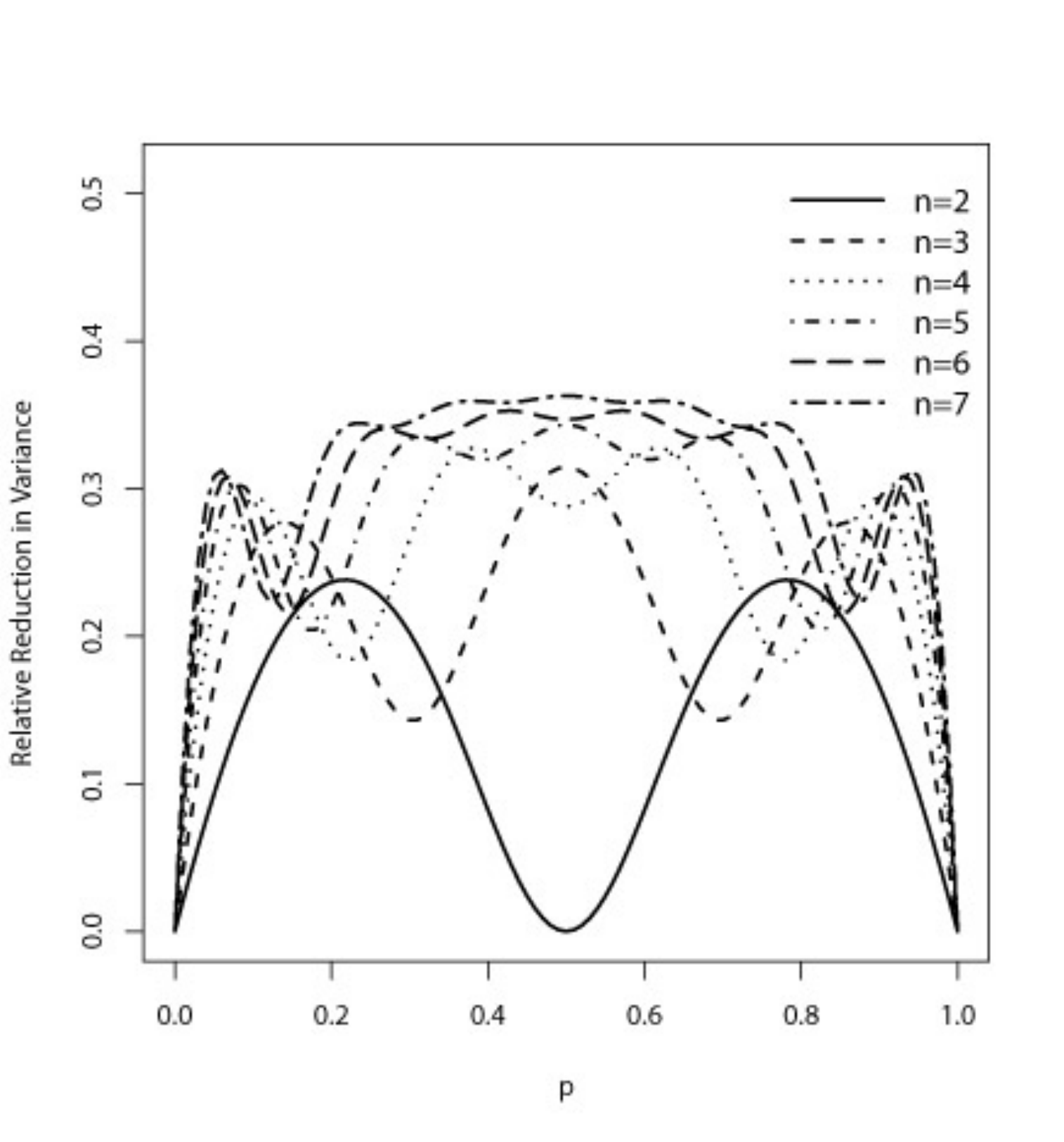} 
    (d) $\alpha_0=1$ 
  \end{minipage} 
  \caption{\large{$\mbox{RRV}(\hat{f}_{\PROS},\hat{f}_{\RSS})$ for
$n=2,\dots,7$ when $m=3$ and $\alpha_0=0,0.3,0.7,1$.}}
\label{fig:Figure2}
\end{figure}

\subsection{Comparing MISE's of $\hat{f}_{\PROS}$, $\hat{f}_{\RSS}$ and $\hat{f}_{\SRS}$}

In order to compare MISE$(\hat{f}_{\PROS})$ with
MISE$(\hat{f}_{\RSS})$ and MISE$(\hat{f}_{\SRS})$, following Chen
(1999), we consider  (a)  the standard Normal
distribution, (b)  the Gamma distribution with shape parameter 3
and scale parameter 1, and (c)  the standard Gumbel distribution. We
 use the Epanechnikov kernel in all estimates and the
bandwidth $h$ is  determined by
$$h={(4/3)}^{1/5}AN^{-1/5},$$
where $A=\min\{\mbox{standard devision of the sample,
interquartile range of the sample/1.34\}}$; see Silverman (1986).
For given $n$, $m$, $L$ and  different misplacement probabilities, we
use the following procedure to estimate the values of
MISE$(\hat{f}_{\PROS})$, MISE$(\hat{f}_{\RSS})$, and
MISE$(\hat{f}_{\SRS})$. For each estimator, the integrated square
error (ISE) $\int(\hat{f}(x)-f(x))^2dx$ is  calculated based on the 
corresponding SRS, imperfect RSS and  imperfect PROS  samples. Then,
the ISE of 5,000 PROS, RSS, and SRS estimates is  obtained. For
each procedure, the average of these 5,000 ISEs is   used as
an  estimate of the corresponding MISEs. The ratios
$$\mbox{RP=MISE}(\hat{f}_{\RSS})/\mbox{MISE}(\hat{f}_{\PROS})\quad  \text{and}\quad
\mbox{SP=MISE}(\hat{f}_{\SRS})/\mbox{MISE}(\hat{f}_{\PROS})$$  are 
obtained as the efficiency of $\hat{f}_{\PROS}$ with respect to
$\hat{f}_{\RSS}$ and $\hat{f}_{\SRS}$, respectively. Table \ref{table:Table2}  shows the values of RP and SP for these distributions with  different
values of $n$, $L$, $\alpha_0=0,0.3,0.5,0.7,1$, and $m=3$.

\tabcolsep=0.11cm
\begin{table}[h!]
\caption{{The efficiency of PROS density estimate with respect to
RSS (RP) and SRS (SP) for different values of $n$, $L$,
$\alpha_0$  and  Normal,
Gamma, and Gumbel distributions when $m=3$.}}\label{table:Table2}
\bigskip
\centering
\small
\begin{tabular}{l c c c c c c c}
\hline \hline
& & & \multicolumn{4}{c}{$\alpha_0$}\\
\cline{4-8}
& &  & 0 & 0.3 & 0.5 & 0.7 & 1\\
\cline{4-8}
Distributions & $n$ & $L$ & (RP , SP)& (RP , SP)& (RP , SP)& (RP , SP)& (RP , SP)\\
\hline
              & 6   & 4   & (1.012,1.030) & (1.000,0.993)  &  (1.022,1.084)  &  (1.100,1.281)  & (1.399,2.151) \\[-1ex]
              & 6   & 8   & (1.026,1.058) & (1.015,1.015)  &  (1.041,1.093)  &  (1.099,1.268)  & (1.309,1.960) \\[-1ex]
Normal        & 8   & 3   & (0.997,0.991) & (1.008,1.008)  &  (1.044,1.128)  &  (1.113,1.337)  & (1.408,2.453) \\[-1ex]
              & 8   & 6   & (0.998,1.014) & (1.006,1.018)  &  (1.042,1.119)  &  (1.106,1.310)  & (1.398,2.265) \\[.1cm]

\cline{2-8}
              & 6   & 4   & (1.021,1.022) & (1.004,1.009)  &  (1.010,1.069)  &  (1.060,1.190)  & (1.233,1.650) \\[-1ex]
              & 6   & 8   & (1.005,1.007) & (1.013,1.014)  &  (1.033,1.061)  &  (1.059,1.159)  & (1.160,1.486) \\[-1ex]
Gamma         & 8   & 3   & (1.018,1.054) & (0.992,1.010)  &  (1.025,1.113)  &  (1.077,1.278)  & (1.224,1.811) \\[-1ex]
              & 8   & 6   & (1.009,0.998) & (1.002,0.995)  &  (1.022,1.063)  &  (1.071,1.195)  & (1.173,1.546) \\[0.1cm]
\cline{2-8}
              & 6   & 4   & (0.985,1.040) & (0.984,1.023)  &  (1.013,1.095)  &  (1.102,1.284)  & (1.283,1.866) \\[-1ex]
              & 6   & 8   & (1.007,0.999) & (1.016,0.990)  &  (1.058,1.064)  &  (1.074,1.172)  & (1.239,1.619) \\[-1ex]
Gumbel        & 8   & 3   & (1.022,1.002) & (0.992,1.010)  &  (1.031,1.099)  &  (1.070,1.265)  & (1.269,1.936) \\[-1ex]
              & 8   & 6   & (0.996,0.984) & (1.009,1.007)  &  (1.037,1.075)  &  (1.092,1.208)  & (1.237,1.738) \\[0.1cm]
\hline

\end{tabular}
\end{table}

Form Table \ref{table:Table2}, it is seen  that as the misplacement probabilities
decrease the efficiency of PROS with respect to RSS and SRS
increases and, as we expect, the efficiency with respect to SRS is
more than RSS procedure. When the misplacement probabilities are
large, $\alpha_0< 0.5$, the three estimators have efficiency near
one.  The
efficiency of PROS with respect to RSS and SRS increases slightly
as $n$ increases (this increment is faster when $m=4$, results in which  are not presented here); however, the efficiency decreases as $L$
increases. The amount of efficiency for the Normal distribution
is higher than the Gamma and Gumbel distributions. For example,
when $n=8$, $L=3$, and $\alpha_0=1$ the efficiencies of PROS with
respect to SRS for the Normal, Gamma, and Gumbel distributions are
$145\%$, $81\%$, and $94\%$, respectively. We observe  that
the main parameter that controls the efficiency is the
misplacement probability matrix $\balpha$ or equivalently the ranking error.
When the ranking errors are high, there is no substantial
difference between $\hat{f}_{\PROS}$, $\hat{f}_{\RSS}$, and
$\hat{f}_{\SRS}$. However, as the ranking errors decrease, our
simulation results show that the PROS density estimate performs
better than  RSS and SRS density estimates in terms of MISE.

\subsection{Results under symmetry assumption}
To investigate the effect of estimating the symmetry point $\mu$
on the MISE of $\hat{f}^*_{\PROS}$, we consider four
distributions (a)  the standard Normal and (b)  Logistic distributions as
light tail distributions, (c)  t-student with 2 degrees of freedom, 
and (d)  the standard Laplace distributions as heavy tail
distributions. For each distribution, a perfect PROS sample of
size $N=nL$ with subset size $m$ are  generated and the four
symmetry point estimators given in (4) were calculated. Then,
MISE$(\hat{f}_{\PROS})$ and MISE$(\hat{f}^*_{\PROS})$ for
$\hat{\mu}_{i}$, $i=1,\dots,4$, were calculated and their ratios are  obtained as the efficiency of
$\hat{f}^*_{\PROS}(x,\hat{\mu}_{i})$'s with respect to
$\hat{f}_{\PROS}(x)$. The results for $m=3$, $n=6,8$, and $L=3,4$
are shown in Table \ref{table:Table4}. The last column  shows the
efficiency  of
$\hat{f}^*_{\PROS}$ with respect to $\hat{f}_{\PROS}$ when the
symmetry point is known.

We observe  that $\hat{f}^*_{\PROS}(x,\hat{\mu}_{3})$
performs the best  in all
cases  which  suggests using the
Hodges-Lehmann type estimator, $\hat{\mu}_{3}$, for estimating the
symmetry point.  For  Normal
distribution, the efficiencies of
$\hat{f}^*_{\PROS}(x,\hat{\mu}_{1})$ and
$\hat{f}^*_{\PROS}(x,\hat{\mu}_{3})$ with respect to
$\hat{f}_{\PROS}(x)$ are competitive. However, for $t(2)$ distribution,  which is  a heavy
tail distribution, it does not hold. On the other hand,
the efficiencies of $\hat{f}^*_{\PROS}(x,\hat{\mu}_{4})$ and
$\hat{f}^*_{\PROS}(x,\hat{\mu}_{3})$ with respect to
$\hat{f}_{\PROS}(x)$ are very close especially in heavy tail
distributions. Generally, we recommend using
$\hat{f}^*_{\PROS}(x,\hat{\mu}_{3})$ when it is assumed that the
underlying population distribution is symmetric. 
Comparing the efficiencies of $\hat{f}^*_{\PROS}(x,\mu=0)$ and $\hat{f}^*_{\PROS}(x,\hat{\mu}_{3})$
with respect to $\hat{f}_{\PROS}(x)$ indicates how much the
efficiency reduces when the symmetry point is estimated. This
reduction is larger when $n=6$ in comparison with $n=8$ (the
maximum value of reduction is about $11\%$ when $n=6$ and $L=3$ in
Normal distribution and the minimum value is $3\%$ when $n=8$ and
$L=4$ in Laplace distribution).

\begin{table}[H]
\caption{{The efficiencies of $\hat{f}^*_{\PROS}(x,\hat{\mu}_{i})$,
$i=1,\dots,4$ with respect to $\hat{f}_{\PROS}(x)$ and the efficiency of $\hat{f}^*_{\PROS}(x,\mu=0)$
with respect to $\hat{f}_{\PROS}(x)$ for $m=3$,
$n=6,8$, and $L=3,4$ when underlying distributions are the
standard Normal, Logistic, Laplace, and t-student with 2 degrees of
freedom.}}\label{table:Table4}
\bigskip
\centering
\begin{tabular}{l c c c c c c c}
\hline \hline
& & & \multicolumn{4}{c}{Estimators} & \\
\cline{4-7}
Distributions & $n$ & $L$ & $\hat{\mu}_{1}$ & $\hat{\mu}_{2}$ & $\hat{\mu}_{3}$ & $\hat{\mu}_{4}$ & $\mu=0$ \\
\hline
              & 6   & 3   & 1.176   & 1.062  &  1.212  &  1.155  & 1.364    \\[-1ex]
              & 6   & 4   & 1.185   & 1.057  &  1.225  &  1.165  & 1.353    \\[-1ex]
Normal        & 8   & 3   & 1.161   & 1.031  &  1.205  &  1.125  & 1.313    \\[-1ex]
              & 8   & 4   & 1.173   & 1.054  &  1.219  &  1.146  & 1.309    \\[0.1cm]
\cline{2-8}
              & 6   & 3   & 1.103   & 1.098  &  1.198  &  1.172  & 1.339    \\[-1ex]
              & 6   & 4   & 1.102   & 1.104  &  1.207  &  1.185  & 1.339    \\[-1ex]
Logistic      & 8   & 3   & 1.108   & 1.095  &  1.205  &  1.159  & 1.307    \\[-1ex]
              & 8   & 4   & 1.108   & 1.092  &  1.205  &  1.162  & 1.293    \\[0.1cm]
\cline{2-8}
              & 6   & 3   & 0.675   & 1.150  &  1.191  &  1.201  & 1.299    \\[-1ex]
              & 6   & 4   & 0.645   & 1.133  &  1.182  &  1.190  & 1.284    \\[-1ex]
t(2)          & 8   & 3   & 0.621   & 1.133  &  1.186  &  1.176  & 1.268    \\[-1ex]
              & 8   & 4   & 0.615   & 1.129  &  1.185  &  1.176  & 1.257    \\[0.1cm]
\cline{2-8}
              & 6   & 3   & 0.998   & 1.130  &  1.151  &  1.158  & 1.234    \\[-1ex]
              & 6   & 4   & 1.002   & 1.119  &  1.141  &  1.146  & 1.217    \\[-1ex]
Laplace       & 8   & 3   & 1.007   & 1.112  &  1.139  &  1.135  & 1.193    \\[-1ex]
              & 8   & 4   & 1.011   & 1.108  &  1.131  &  1.129  & 1.172    \\[0.1cm]
\hline

\end{tabular}
\end{table}

\section{Real Data Application}
In this section, we illustrate our method with a real data set collected by the Iranian Ministry of Jihade-Agricultural (IMJA) in 2005. Jafari Jozani et al.\ (2012) used this data set in a different context to examine the accuracy of several ratio estimators of the population mean based on  RSS design. The  data set contains  the information of the wheat yield and the total acreage of land which is planted in wheat for 304 cities in 31 provinces of Iran in 2005.  Wheat yield estimation is important for advanced planning and implementation of policies related to food distribution, import-export decision, etc. We  provide kernel density estimates of the distribution of  $Y$ = wheat yield (in ton) as the variable of interest by using  $X$ = total acreage of the planted land in wheat (in acre) as the auxiliary variable which can be used for  the ranking purpose. The correlation coefficient between $X$ and $Y$ is 0.786. For ease of  computations, we divided  the values of $Y$ by 100,000,000.


In order to estimate the density function of wheat yield, we regarded  this data set as a population and extracted PROS, RSS and SRS with replacement  samples of size $N=nL$ from the population. For each  design, the density estimates are obtained and the asymptotic variance estimates  are  calculated. Then,  this process is  repeated $M$ times and the average of density estimates at a fixed point are  considered  as the density estimates. In addition, for each design, the average of asymptotic variance estimates are also calculated for constructing asymptotic pointwise confidence bounds.
We  take  $n=3$, $m=4$, and $M=20$. The histogram of 304 records of $Y$ is shown in Figure \ref{fig:Figure5}. The PROS density estimate and its 95 percent asymptotic pointwise confidence bounds are shown in third column of  Figure \ref{fig:Figure5}. The SRS and RSS density estimates and their corresponding 95 percent  pointwise confidence bounds are also shown in   the 1st and 2nd columns   of Figure \ref{fig:Figure5}. In all cases, we used Epanechnikov kernel and the bandwidth was determined  as in Section 5.2. 
To obtain the probabilities of subsetting errors, we used the proposed algorithm in Section 4. We estimated the probabilities of subsetting errors for each 20 samples with SAE=0.001. The average of these estimates are given below 
$$\hat{\mathbf{\alpha}}=\left[ \begin{array}{ccc}
0.832 & 0.148 & 0.020 \\
0.148 & 0.737 & 0.116 \\
0.020 & 0.116 & 0.864 \end{array}
\right].$$
The standard deviations of the estimates vary between 0.05 and 0.19. Our estimates show that the probabilities of correct subsetting are much higher than the probabilities of incorrect subsetting and this is due to  the fact that $X$ and $Y$ are highly  correlated.
We observe  that the density estimates look  similar. However, the  confidence bounds for PROS design  are much narrower than the confidence bounds  obtained  by  SRS and RSS designs.  
\begin{figure}[H]
  \centering\vspace{-2cm}
  \includegraphics[width=18cm,height=12cm]{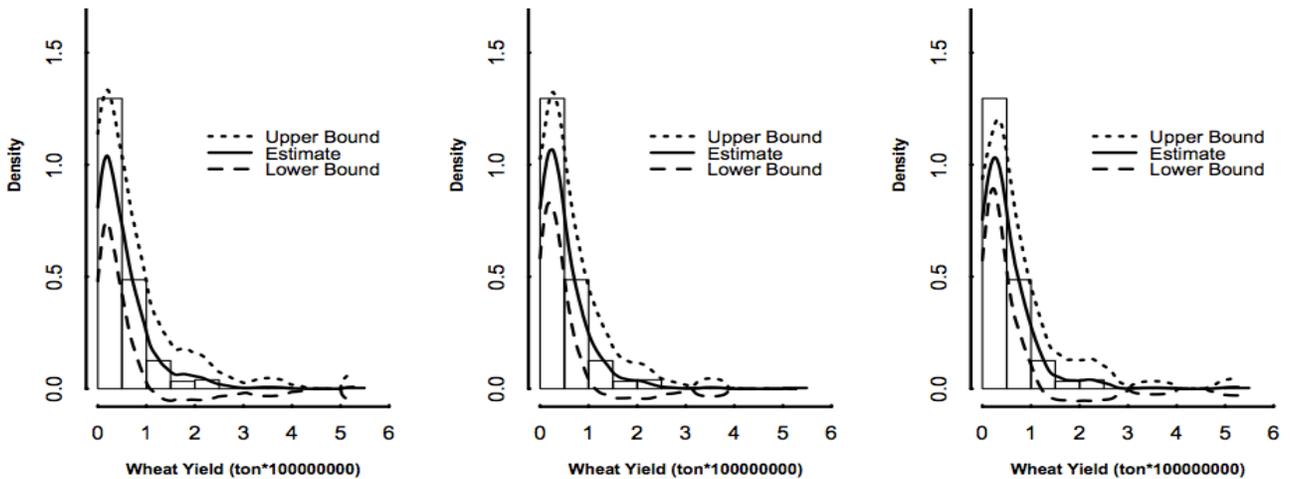}
  \vspace{-3.5cm}
  \caption{{The histogram of wheat yield (in ton $\times$ 100,000,000), the SRS, RSS and PROS kernel density estimates and their corresponding asymptotic 95 percent  pointwise confidence bounds. }}
  \label{fig:Figure5}
\end{figure}

%
%

\section*{Acknowledgments}
Mohammad Jafari Jozani  gratefully acknowledges the research support
of the Natural Sciences and Engineering Research Council of Canada.

\end{document}